\newcommand{\inlineitem}[1][]{%
	\ifnum\enit@type=\tw@
	{\descriptionlabel{#1}}
	\hspace{\labelsep}%
	\else
	\ifnum\enit@type=\z@
	\refstepcounter{\@listctr}\fi
	\quad\@itemlabel\hspace{\labelsep}%
	\fi} \makeatother
\newcommand{\ga}{\alpha}
\newcommand{\gd}{\delta}
\newcommand{\gs}{\sigma}
\newcommand{\gf}{\phi}
\newcommand{\Gd}{\Delta}
\newcommand{\Gom}{\Omega}
\newcommand{\subs}{\subset}
\newcommand{\sups}{\supset}
\newcommand{\bs}{\backslash}
\newcommand{\ti}{\tilde}
\newcommand{\mbb}{\mathbb}
\newcommand{\mcl}{\mathcal}
\newcommand{\us}{\underset}
\newcommand{\os}{\overset}
\newcommand{\lra}{\longrightarrow}
\newcommand{\llra}{\longleftrightarrow}
\newcommand{\La}{\Leftarrow}
\newcommand{\Ra}{\Rightarrow}
\newcommand{\Llra}{\Longleftrightarrow}
\newcommand{\es}{\emptyset}
\newcommand{\equ}[1]{%
	\begin{equation*}
		#1
	\end{equation*}
}
\newcommand{\equa}[1]{%
	\begin{equation*}
		\begin{aligned}
			#1
		\end{aligned}
	\end{equation*}
}
\newcommand{\equan}[2]{%
	\begin{equation}
		\label{Eq:#1}
		\begin{aligned}
			#2
		\end{aligned}
	\end{equation}
}
\DeclareMathOperator{\Ker}{Ker}
\DeclareMathOperator{\Ran}{Ran}
\newtheorem{theorem}{Theorem}[section]
\newtheorem{lemma}[theorem]{Lemma}
\newtheorem{ques}[theorem]{Question}
\newtheorem{claim}[theorem]{Claim}
\newtheorem{obs}[theorem]{Observation}
\theoremstyle{definition}
\newtheorem{defn}[theorem]{Definition}
\theoremstyle{remark}
\newtheorem{note}[theorem]{Note}
\newtheorem{remark}[theorem]{Remark}
\numberwithin{equation}{section}
\def\namedlabel#1#2{\begingroup
	\def\@currentlabel{#2}%
	\label{#1}\endgroup
}
\newtheorem*{thmA}{\bf{Theorem A}}
\newtheorem*{thmB}{\bf{Theorem B}}
\begin{document}
\title[On Infinity Type Hyperplane Arrangements and Convex Positive Bijections]{On Infinity Type Hyperplane Arrangements and Convex Positive Bijections}
\author[C.P. Anil Kumar]{Author: C.P. Anil Kumar*}
\address{Center for Study of Science, Technology and Policy
\# 18 \& \#19, 10th Cross, Mayura Street,
Papanna Layout, Nagashettyhalli, RMV II Stage,
Bengaluru - 560094
Karnataka,INDIA}
\email{akcp1728@gmail.com}
\thanks{*The author is supported by a research grant and facilities provided by Center for study of Science, 
Technology and Policy (CSTEP), Bengaluru, INDIA for this research work.}
\subjclass[2010]{Primary: 52C35}
\keywords{Linear inequalities in many variables, hyperplane arrangements, infinity type hyperplane arrangements}
\begin{abstract}
In this article we prove in main Theorem~\ref{theorem:NRTMT} that any infinity type real hyperplane arrangement $\mcl{H}_n^m$ (Definition~\ref{defn:InfinityArrangements}) with the associated normal system $\mcl{N}$ (Definitions~[\ref{defn:NS},~\ref{defn:NSAHA}]) can be represented isomorphically (Definition~\ref{defn:Iso})
by another infinity type hyperplane arrangement $\ti{\mcl{H}}_n^m$ with a given associated normal system $\ti{\mcl{N}}$
if and only if the normal systems $\mcl{N}$ and $\ti{\mcl{N}}$ are isomorphic, that is, there is a convex positive bijection (Definition~\ref{defn:CPB}) 
between a pair of associated sets of normal antipodal pairs of vectors of $\mcl{N}$ and $\ti{\mcl{N}}$. 
\end{abstract}
\maketitle 
\section{\bf{Introduction and a brief survey}}


The theory of hyperplane arrangements is a well studied and vast subject.
There are many view points and perspectives on this subject. The literature survey
of this field consists of a lot of very good open problems. From a theoretical view point, 
A.~Dimca~\cite{MR3618796}, P.~Orlik \& H.~Terao~\cite{MR1217488} give an accessible introduction
to this subject to those who are interested in algebraic geometry and algebraic topology. In the context of arrangements,
matroids form combinatorial abstractions of vector configurations and hyperplane arrangements. E.~Katz~\cite{MR3702317}
gives a survey of theory of matroids aimed at algebraic geometers.
From a computational view point, problem nine in S.~Smale~\cite{MR1631413},~\cite{MR1754783}, is the following well known open question 
in this subject for the past two and a half decades. 

\begin{ques}
Does there exist a strongly polynomial time algorithm to decide the feasibility of the linear system of 
inequalities \equ{Ax \geq b} over the field of rational numbers?
\end{ques}
Also N.~Megiddo~\cite{MR1117042} mentions about the various computational aspects with a 
footnote mentioning the relevance of the subject to economists as well due to its vast applicability. 

Here in this article we consider the infinity type hyperplane arrangements (refer to Defintion~\ref{defn:InfinityArrangements}) over the field $\mbb{R}$ of real numbers and classify them by associating certain invariants such as normal systems (refer to Definition~\ref{defn:NS}). For infinity type line arrangements refer to C.~P.~Anil Kumar~\cite{CPAK}.  Here we answer when two real infinity type hyperplane arrangements in $\mbb{R}^m$ are isomorphic, though in principle, the field $\mbb{R}$ can be replaced by an ordered field $\mbb{F}$. 

\begin{remark}
	For those who are interested in generalising to ordered fields other than the subfields of $\mbb{R}$, I would like to mention that the basic theory of ordered fields is given in N.~Jacobson~\cite{MR0780184}~(Chapter $5$),\cite{MR1009787}~(Chapter $11$) and S.~Lang~\cite{MR1878556} (Chapter $11$).
\end{remark}

This type of field is also briefly mentioned in Survey~\cite{MR1117042} on page $229$ 
from an algebraic point of view. The association of 
invariants to hyperplane arrangements for the purpose of classification of the same is a well established 
method over various fields like $\mbb{Q},\mbb{R},\mbb{C}$ and finite fields $\mbb{F}_q$ where $q$ is a 
prime power. By associating invariants to hyperplane arrangements here, 
we give a criterion as to when an infinity type hyperplane arrangement is represented 
isomorphically by another such hyperplane arrangement with a given set of normals, more precisely, with a given normal system (refer to Definition~\ref{defn:NS}), 
where we prove main Theorem~\ref{theorem:NRTMT} or equivalently Theorem~\ref{theorem:SNRT}, both of which are stated in Section~\ref{sec:Preamble}. 
This result is new and its proof uses techniques from geometry of space and spatial arrangement of points in the field of linear algebra, convex geometry and theory of polytopes.

\section{\bf{Definitions and statement of the main result}}
\label{sec:Preamble}
We begin the section with a few definitions.
\begin{defn}[A Hyperplane Arrangement, A Generic Hyperplane Arrangement, A Central Hyperplane Arrangement]
	\label{defn:HA}
	~\\
	Let $m,n$ be positive integers. We say a set 
	\equ{\mcl{H}_n^m=\{H_1,H_2,\ldots,H_n\}} of $n$ affine hyperplanes in $\mbb{R}^m$ forms a 
	hyperplane arrangement. We say that they form a generic hyperplane arrangement or a hyperplane arrangement in general position, if Conditions 1,2 are satisfied.
	\begin{itemize}
		\item Condition 1: For $1\leq r \leq m$, the intersection of any $r$ hyperplanes has dimension $m-r$.
		\item Condition 2: For $r>m$, the intersection of any $r$ hyperplanes is empty.
	\end{itemize}
	We say that the hyperplane arrangement $\mcl{H}_n^m$ is central if $\us{i=1}{\os{n}{\cap}}H_i\neq \es$.
\end{defn}
 \begin{defn}[Normal System]
	\label{defn:NS}
	~\\
	Let $\mcl{N}=\{L_1,L_2,\ldots,L_n\}$ be a finite set of lines passing through the origin in $\mbb{R}^m$. Let $\mcl{U}=\{\pm v_1,\pm v_2,\ldots,\pm v_n\}$ be a set of antipodal pairs of non-zero vectors on these lines. We say that
	$\mcl{N}$ forms a normal system, if the set 
	\equ{\mcl{B}=\{v_1,v_2,\ldots,v_n\}}
	of vectors has the property that, any subset $\mcl{C}\subs \mcl{B}$ of cardinality at most $m$ is a linearly independent set. 
\end{defn}
\begin{defn}[Hyperplane arrangement given by a normal system]
	\label{defn:HAGivenbyNS}
	~\\
	Let $\mcl{N}=\{L_1,L_2,\ldots,L_n\}$ be a normal system in $\mbb{R}^m$. Let 
	$\mcl{U}=\{\pm (a_{i1},a_{i2},\ldots,a_{im})\mid 0\neq (a_{i1},a_{i2},\ldots,a_{im})\in L_i, 1\leq i\leq n\}$ 
	be a set of antipodal pairs of vectors of the normal system $\mcl{N}$. We fix the  matrix $[a_{ij}]_{1\leq i\leq n,1\leq j\leq m}\in M_{n\times m}(\mbb{R})$. Let $\mcl{H}_n^m=\{H_1,H_2,\ldots,H_n\}$ 
	be any hyperplane arrangement in $\mbb{R}^m$ whose equations are given by 
	\equ{H_i:\us{j=1}{\os{m}{\sum}} a_{ij}x_j=b_i \text{ for some }b_i\in \mbb{R}.}
	We say that the hyperplane arrangement $\mcl{H}_n^m$ is given by the normal system $\mcl{N}$.
\end{defn}
\begin{defn}[Normal System Associated to a Generic Hyperplane Arrangement]
	\label{defn:NSAHA}
	~\\
	Let $\mcl{H}^m_n=\{H_i:\us{j=1}{\os{m}{\sum}}a_{ij}x_j=b_i,1\leq i \leq n\}$ be a generic hyperplane arrangement. Then the normal system $\mcl{N}$
	associated to $\mcl{H}^m_n$ is given by
	\equ{\mcl{N}=\{L_i=\{t(a_{i1},a_{i2},\ldots,a_{im})\in \mbb{R}^m\mid t\in \mbb{R}\}\mid 1\leq i \leq n\}}
	and a set of antipodal pairs of normal vectors is given by 
	\equ{\mcl{U}=\{\pm v_1,\ldots,\pm v_n\}} where $0\neq v_i\in L_i, 1\leq i \leq n$.
	For example we can choose by default 
	\equ{\mcl{U}=\{\pm (a_{i1},a_{i2},\ldots,a_{im})\in \mbb{R}^m\mid 1\leq i \leq n\}.}
\end{defn}
\begin{defn}[Convex Positive Bijection and Isomorphic Normal Systems]
\label{defn:CPB}
~\\
Let  \equ{\mcl{N}_1=\{L_1,L_2,\ldots,L_n\},\mcl{N}_2=\{M_1,M_2,\ldots,M_n\}} 
be two finite sets of lines passing through the origin in $\mbb{R}^m$, both of them have the same 
cardinality $n$, which form normal systems. Let 
\equ{\mcl{U}_1=\{\pm v_1,\pm v_2,\ldots,\pm v_n\},\mcl{U}_2=\{\pm w_1,\pm w_2,\ldots,\pm w_n\}} 
be two sets of antipodal pairs of vectors on these lines in $\mcl{N}_1,\mcl{N}_2$ respectively. We say a bijection 
$\gd:\mcl{U}_1\lra \mcl{U}_2$ is a convex positive bijection if 
\equ{\gd(-u)=-\gd(u),u\in \mcl{U}_1} and for any basis $\mcl{B}=\{u_1,u_2,\ldots,u_m\}\subs \mcl{U}_1$
and a vector $u\in \mcl{U}_1$ we have 
\equa{u&=\us{i=1}{\os{m}{\sum}}a_iu_i \text{ with }a_i>0, 1\leq i \leq m, \text{ if and only if },\\
\gd(u)&=\us{i=1}{\os{m}{\sum}}b_i\gd(u_i) \text{ with }b_i>0, 1\leq i \leq m.}
We say two normal systems are isomorphic if there exists a convex positive bijection between their 
corresponding sets of antipodal pairs of vectors. 
\end{defn}
\begin{defn}[Isomorphism Between Two Generic Hyperplane Arrangements]
	\label{defn:Iso}
	Let
	\equ{(\mcl{H}_n^m)_1=\{H^1_1,H^1_2,\ldots,H^1_n\},
		(\mcl{H}_n^m)_2=\{H^2_1,H^2_2,\ldots,H^2_n\}}
	be two generic hyperplane arrangements in $\mbb{R}^m$. We say a map 
	$\gf:(\mcl{H}_n^m)_1 \lra (\mcl{H}_n^m)_2$
	is an isomorphism between these two generic hyperplane arrangements if $\gf$ is a bijection between the sets
	$(\mcl{H}_n^m)_1,(\mcl{H}_n^m)_2$, in particular on the subscripts $1\leq i\leq n$ satisfying the following property: given 
	$1\leq i_1<i_2<\ldots<i_{m-1}\leq n$ and lines \equ{L=H^1_{i_1}\cap H^1_{i_2}\cap \ldots \cap H^1_{i_{m-1}},
		M=H^2_{\gf(i_1)}\cap H^2_{\gf(i_2)}\cap \ldots \cap H^2_{\gf(i_{m-1})},}
	the order of vertices, that is, zero-dimensional intersections on the lines $L,M$, agrees via the bijection induced by $\gf$ again 
	on the sets of subscripts of cardinality $m$ (corresponding to the vertices on $L$) containing 
	$\{i_1,i_2,\ldots,i_{m-1}\}$ and those (corresponding to the vertices on $M$) containing 
	$\{\gf(i_1),\gf(i_2),\ldots,\gf(i_{m-1})\}$. There are four possibilities of pairs of orders and any one
	pairing of orders out of these four pairs must agree via the map induced by $\gf$. We say the isomorphism $\gf$ preserves subscripts or $\gf$ is trivial on subscripts if in addition to being an isomorphism it satisfies $\gf(H_i^1)=H_i^2$ for $1\leq i\leq n$.
\end{defn}
\begin{note}
	If there is an isomorphism between two generic hyperplane arrangements $(\mcl{H}_n^m)_i,i=1,2$,
	then there exists a piecewise linear bijection of $\mbb{R}^m$ to $\mbb{R}^m$ which takes one arrangement to 
	another, using suitable triangulation of the polyhedral regions. For obtaining a piecewise linear isomorphism 
	extension from vertices to the one-dimensional skeleton of the arrangements, further subdivision is not needed.
\end{note}
\begin{defn}[Central Point on a Line]
Let $L$ be a line in an Euclidean space and let $P_1,P_2,P_3$ be three points on the line $L$. We say $P_2$ is the central point among $P_1,P_2,P_3$ if the order of these points on the line $L$ is either $P_1,P_2,P_3$ or $P_3,P_2,P_1$. 
\end{defn}
Since the main of theme of this article is based on central points, here we mention a theorem on preservation of central points without proof as this is a standard theorem.
\begin{theorem}[A Theorem on Preservation of Central Points]
\label{theorem:PCP}
~\\
Let 
\equ{(\mcl{H}_n^m)_1=\{H^1_1,H^1_2,\ldots,H^1_n\},
(\mcl{H}_n^m)_2=\{H^2_1,H^2_2,\ldots,H^2_n\}}
be two generic hyperplane arrangements in $\mbb{R}^m$. Let $\gf:(\mcl{H}_n^m)_1 \lra (\mcl{H}_n^m)_2$
be a bijection. Using the subscripts let $\gf^{vert}:Vert((\mcl{H}_n^m)_1) \lra 
Vert((\mcl{H}_n^m)_2)$ be the induced map on the vertices of the arrangements, that is, on the subsets of $\{1,2,\ldots,n\}$ of cardinality $m+1$. 
Then $\gf$ is an isomorphism of the arrangements if and only if for any three vertices 
on any line of the arrangement in $(\mcl{H}_n^m)_1$ the map $\gf^{vert}$ preserves the centrality property for the central vertex.
\end{theorem}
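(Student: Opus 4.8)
The plan is to reduce the statement to a purely combinatorial fact about finite linear orders, namely that the ternary betweenness relation determines a linear order up to reversal. First I would record the book-keeping that makes the incidence structure rigid. By Condition~1 at $r=m$ every $m$-subset of $\{1,\ldots,n\}$ cuts out a single point, and by Condition~2 no point lies on more than $m$ hyperplanes; hence the vertices of each arrangement are in canonical bijection with the $m$-subsets of $\{1,\ldots,n\}$, and $\gf^{vert}$ is simply the map induced on $m$-subsets by the bijection $\gf$ on subscripts. Likewise a line $L=H^1_{i_1}\cap\cdots\cap H^1_{i_{m-1}}$ is indexed by an $(m-1)$-subset, and the vertices lying on $L$ are exactly the $m$-subsets containing $\{i_1,\ldots,i_{m-1}\}$. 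Consequently $\gf^{vert}$ automatically carries the vertices on $L$ bijectively onto the vertices on the corresponding line $M=H^2_{\gf(i_1)}\cap\cdots\cap H^2_{\gf(i_{m-1})}$. Thus incidence of vertices with lines is combinatorially forced and preserved by $\gf^{vert}$, and the only geometric content left is whether the order of the vertices along each line is respected.

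Next I would fix the geometric meaning of ``central vertex''. Writing a line of the arrangement as $\{p_0+tv\mid t\in\mbb{F}\}$, the two choices of direction $v$ give the two linear orders on its vertices, which are reverses of one another, and a vertex $q$ is the central one of a collinear triple $\{p,q,r\}$ precisely when $q=p+t(r-p)$ with $0<t<1$; this betweenness relation is independent of the direction chosen. This is the one place where the ordered-field hypothesis enters essentially, since it is what supplies a linear order on each line. With this language, Definition~\ref{defn:Iso} asks exactly that, on each line, $\gf^{vert}$ be order-preserving for one of the four pairings of the two orders on $L$ with the two orders on $M$, i.e.\ that $\gf^{vert}$ restrict to a monotone (increasing or decreasing) map on each line.

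The crux, which I would prove in isolation, is the lemma: if $\tau\colon S\lra S'$ is a bijection between finite linearly ordered sets that preserves the central element of every triple, then $\tau$ is order-preserving or order-reversing. For $|S|\leq 2$ this is vacuous, and for $|S|\geq 3$ one characterizes the two endpoints as the unique elements that are never central in any triple; since $\tau$ both preserves and reflects centrality (using injectivity and that every triple of $S'$ is the $\tau$-image of a triple of $S$), it carries endpoints to endpoints, so after possibly reversing the order of $S'$ we may assume $\tau$ sends the minimum $e$ of $S$ to the minimum of $S'$. For $x,y\neq e$ one then has $x<y$ iff $x$ is central in $\{e,x,y\}$ iff $\tau(x)$ is central in $\{\tau(e),\tau(x),\tau(y)\}$ iff $\tau(x)<\tau(y)$, the last step using that $\tau(e)$ is the minimum; hence $\tau$ is order-preserving.

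Finally I would assemble the two directions. If $\gf$ is an isomorphism then on each line the order is preserved up to reversal, and since reversal preserves betweenness the central vertex of every collinear triple is preserved. Conversely, if $\gf^{vert}$ preserves the central vertex of every triple on every line, then applying the lemma to $\tau=\gf^{vert}$ restricted to the geometrically ordered vertices of each line $L$ and those of its image line $M$ shows that $\gf^{vert}$ is monotone on $L$, which is exactly the condition that one of the four order-pairings agrees; as $L$ is arbitrary, $\gf$ is an isomorphism. The main obstacle is simply the lemma together with the consistent matching of the two geometric orders on a line against the two admissible reversals of Definition~\ref{defn:Iso}; the degenerate lines carrying fewer than three vertices make both hypothesis and conclusion automatic, and no deeper difficulty arises, which is why the result is classical.
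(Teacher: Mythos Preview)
Your argument is correct. The paper itself does not prove Theorem~\ref{theorem:PCP}: it is introduced with the sentence ``Here we mention a theorem on preservation of central points without proof as this is a standard theorem,'' and no proof follows. So there is no paper proof to compare against; your proposal supplies exactly the kind of elementary justification the author chose to omit. The reduction to the combinatorial lemma that a betweenness-preserving bijection of finite linear orders is monotone, together with the observation that the incidence of vertices with lines is forced by the subscript bijection via Conditions~1 and~2, is the natural route and is carried out cleanly.
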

\begin{defn}[An Hyperplane at Infinity]
	\label{defn:HAInfinity}
	Let $\mcl{H}^m_n$ be a generic hyperplane arrangement in $\mbb{R}^m$.
	We say a hyperplane $H \subs \mbb{R}^m$ is a hyperplane at infinity with respect to
	$\mcl{H}^m_n$,  if 
	\begin{enumerate}
		\item $\mcl{H}^m_n \cup\{H\}$ is a generic hyperplane arrangement and
		\item 	all the bounded intersections of the arrangement $\mcl{H}^m_n$, 
		that is, the zero-dimensional vertices of intersection of the arrangement $\mcl{H}^m_n$ lie only on ``one side" of $H$ (possibly the ``one side" includes the hyperplane $H$ if $H\in \mcl{H}^m_n$).
	\end{enumerate}
\end{defn}
\begin{defn}[An Infinity Type Arrangement]
	\label{defn:InfinityArrangements}
	~\\
	Let $\mcl{H}^n_m=\{H_1,H_2,\ldots,H_n\}$ be a generic hyperplane arrangement in $\mbb{R}^m$.
	We say $\mcl{H}^n_m$ is an infinity type arrangement if there exists a permutation $\gs\in S_n$ such that
	the hyperplane $H_{\gs(l)}$ is a hyperplane at infinity with respect to the arrangement 
	\equ{\{H_{\gs(1)},H_{\gs(2)},\ldots,H_{\gs(l-1)}\},\ 2\leq l\leq n.}
	The permutation $\gs$ is called an infinity permutation for the arrangement $\mcl{H}^n_m$. It need not be unique.
\end{defn}
\begin{remark}
The infinity type line arrangements, their nomenclature and some of their properties are discussed in C.~P.~Anil Kumar~\cite{CPAK}. 
\end{remark}
We state the main theorem of this article and also state another equivalent or more symmetric one.

\begin{thmA}[Normal Representation Theorem: Main Theorem]
\namedlabel{theorem:NRTMT}{A}
~\\
Let $\mcl{N}_1=\{L_1,L_2,\ldots,L_n\}$ be a normal system in $\mbb{R}^m$ of cardinality $n$ and $\mcl{U}_1$ be a set of 
antipodal pairs of vectors of this normal system. Let $\mcl{H}_n^m$ be an infinity type hyperplane 
arrangement in $\mbb{R}^m$ and $\mcl{N}_2$ be the normal system associated to $\mcl{H}_n^m$ with $\mcl{U}_2$ a set of antipodal pairs of normal vectors of the normal system $\mcl{N}_2$. Then the infinity type hyperplane arrangement $\mcl{H}_n^m$ with normal system $\mcl{N}_2$ can be represented 
isomorphically by another infinity type hyperplane arrangement with normal system $\mcl{N}_1$ if and only if there exists a convex positive bijection between $\mcl{U}_1$ and $\mcl{U}_2$.
\end{thmA}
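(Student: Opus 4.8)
\emph{Plan of proof.} The bridge between the two sides of the equivalence is Theorem~\ref{theorem:PCP}: an isomorphism of arrangements is exactly a subscript bijection that, on every line of the arrangement, preserves the central one among any three collinear vertices. So I would work throughout with this betweenness data and show that it is governed precisely by the positive linear combinations of normals that a convex positive bijection (Definition~\ref{defn:CPB}) must preserve. The heart of the argument is a \emph{dictionary lemma} translating geometric betweenness into a sign condition on normal-base expansions, after which both implications become bookkeeping.

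To set up the dictionary, fix a line $L = H_{i_1}\cap\cdots\cap H_{i_{m-1}}$ and let $H_j,H_k,H_l$ meet $L$ at vertices $P_j,P_k,P_l$. Since $H_k\cap L=\{P_k\}$, the vertex $P_k$ is central iff $P_j$ and $P_l$ lie on opposite sides of $H_k$, i.e. $f_k(P_j)f_k(P_l)<0$, where $f_r(x)=\langle v_r,x\rangle-b_r$. Expanding $v_k=\us{a}{\sum}\alpha_a v_{i_a}+\beta_j v_j$ in the base $\{v_{i_1},\ldots,v_{i_{m-1}},v_j\}$ and using $P_j\in H_{i_a}\cap H_j$ gives the clean identity $f_k(P_j)=\us{a}{\sum}\alpha_a b_{i_a}+\beta_j b_j-b_k$, and symmetrically for $f_k(P_l)$ with the expansion coefficient $\beta_l$. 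The signs of the coefficients $\beta_j,\beta_l$ of the free normal in each base expansion, recorded over all such bases and normalized by the antipodal condition $\gd(-u)=-\gd(u)$, are exactly the chirotope-type data that a convex positive bijection preserves. This is the lemma: the sign pattern of central-vertex relations on every line both determines and is determined by the positive-combination structure of the normal system.

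For the forward direction, suppose $(\mcl{H}_n^m)^{\mbb{F}}$ with normals $\mcl{U}_2$ is represented isomorphically, via $\gf$, by an arrangement with normals $\mcl{U}_1$. I would define $\gd:\mcl{U}_1\ra\mcl{U}_2$ on the underlying lines by the subscript bijection of $\gf$, fixing the antipodal sign so that $\gd(-u)=-\gd(u)$; the four order pairings of Definition~\ref{defn:Iso} correspond to orienting $L$ and its image and are absorbed by this normalization. By Theorem~\ref{theorem:PCP} the map $\gf$ preserves central vertices, so by the dictionary the sign patterns of base expansions agree on both sides, which is precisely the convex positivity requirement of Definition~\ref{defn:CPB}. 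Conversely, given a convex positive bijection $\gd$, I would build a realizing arrangement with normal system $\mcl{N}_1$: take the normals underlying $\mcl{U}_1$ matched to the hyperplanes of $(\mcl{H}_n^m)^{\mbb{F}}$ through $\gd$, and seek translations $(b_1',\ldots,b_n')\in\mbb{F}^n$ reproducing the original central-vertex pattern. Each required relation becomes, by the dictionary, a strict linear inequality in the $b_r'$ whose coefficient signs, by convex positivity, match those of the original system; I would then prove this sign-compatible system feasible over $\mbb{F}$ by induction on $n$, showing that once $H_1',\ldots,H_{n-1}'$ are placed isomorphically the admissible positions of $H_n'$ on each line form an interval, and that these intervals are mutually consistent because the preserved positive-combination relations force their defining half-lines to overlap.

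The main obstacle is exactly this last feasibility step. Betweenness on a line genuinely depends on the translations as well as the normals, so a convex positive bijection does not by itself pin down an arrangement; the real content is that its normal-level sign compatibility suffices to make the global system of betweenness inequalities simultaneously solvable. Since $\mbb{F}$ need carry neither completeness nor topology, this must be achieved by a purely order-theoretic convexity argument---an inductive interval-intersection (or Farkas-type) feasibility argument---rather than any limiting construction, and it is here that the normal simple bases of Definition~\ref{defn:NSB} together with the convexity built into Definition~\ref{defn:CPB} do the essential work.
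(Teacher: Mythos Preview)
Your dictionary lemma is stated too strongly and, as written, is false. You claim that ``the sign pattern of central-vertex relations on every line both determines and is determined by the positive-combination structure of the normal system,'' but you immediately acknowledge in your final paragraph that betweenness depends on the translations $b_r$ as well as on the normals. Indeed, two arrangements with the \emph{same} normal system---hence identical chirotope data---generally have different central-vertex patterns; this is exactly why the concurrency arrangement (Definition~\ref{defn:CA}) has many open cones. So your identity $f_k(P_j)=\sum_a\alpha_a b_{i_a}+\beta_j b_j-b_k$ is correct, but its sign is not a function of $\operatorname{sign}(\beta_j)$ alone, and you cannot read off the chirotope from betweenness, or vice versa, by a pointwise dictionary. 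Both directions of your proposed equivalence therefore rest on a lemma that does not hold.

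The paper sidesteps exactly this difficulty, and it is instructive to see how. First, via the concurrency arrangement (Theorem~\ref{theorem:CA}, Corollary~\ref{cor:AB}) it reduces Theorem~\ref{theorem:NRTMT} to the symmetric Theorem~\ref{theorem:SNRT}: once \emph{some} pair of arrangements with the two normal systems is isomorphic, one can navigate cone-by-cone to match any prescribed arrangement. Second, for Theorem~\ref{theorem:SNRT} it never tries to solve your global feasibility system. Instead it builds both arrangements inductively by adding each new hyperplane \emph{at infinity} (Definition~\ref{defn:HAInfinity}); this collapses the new betweenness constraints to those on the induced codimension-one arrangement on $H_l$, which by Observation~\ref{obs:ChangeSign} (a genuine two-dimensional dictionary) are controlled purely by the signs in the normal expansions. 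Extension Theorem~\ref{theorem:ExtTheorem}, together with Zariski connectedness of the $2$-skeleton, then glues the local pairings consistently. Your proposed ``interval-intersection feasibility'' step is precisely what this hyperplane-at-infinity device replaces; without it, the inductive placement of $H_n'$ must satisfy betweenness constraints against \emph{every} existing vertex on every line through it, and you have given no mechanism---Farkas-type or otherwise---showing those half-lines intersect over an arbitrary ordered field.
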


\begin{thmB}[Normal Representation Theorem: Symmetric Form]
\namedlabel{theorem:SNRT}{B}
~\\
If two infinity type hyperplane arrangements $(\mcl{H}_n^m)_1$, $(\mcl{H}_n^m)_2$ in $\mbb{R}^m$
are isomorphic then their associated normal systems $\mcl{N}_1$ and $\mcl{N}_2$ are isomorphic. 
Conversely, if we have two infinity type hyperplane arrangements $(\mcl{H}_n^m)_1$, $(\mcl{H}_n^m)_2$  in $\mbb{R}^m$,
whose associated normal systems $\mcl{N}_1$ and $\mcl{N}_2$ are isomorphic, then, there exist translates of 
each of the hyperplanes in the hyperplane arrangement $(\mcl{H}_n^m)_2$, giving rise to a translated infinity type
hyperplane arrangement $\ti{\mcl{H}}_n^m$, such that, $\ti{\mcl{H}}_n^m$ and $(\mcl{H}_n^m)_1$
are isomorphic.
\end{thmB}
\begin{remark}
The statement of Theorems~\ref{theorem:NRTMT},~\ref{theorem:SNRT} for $m=1$, that is, for infinity type line arrangements in the plane, is clear and immediate.
\end{remark}
Now we mention an important observation which is the crux in proving Theorem~\ref{theorem:SNRT} later.
This observation is a straight forward observation in the plane $\mbb{R}^2$.
\begin{figure}[h]
\centering
\includegraphics[width = 1.0\textwidth]{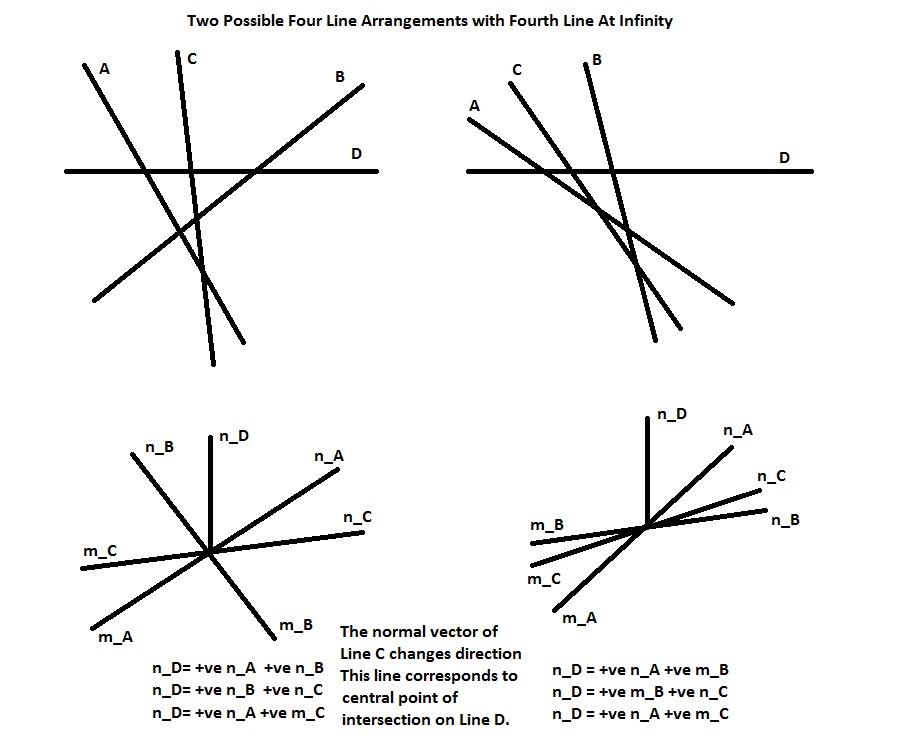}
\caption{Central Point and Normal Directions}
\label{fig:MinusOne}
\end{figure}
\begin{obs}[On the Central Point of Intersection]
\label{obs:ChangeSign}
~\\
\begin{center}
\fbox{\begin{varwidth}{\dimexpr\textwidth-2\fboxsep-2\fboxrule\relax}
If we have three pairwise intersecting generic lines, intersecting a fourth generic line at infinity in a plane ~\emph{(}refer to Definition~\emph{\ref{defn:HAInfinity})}, and
if we express a normal of the fourth line as a positive linear combination of any pair of normals of the  
other three lines then the normal direction of the line corresponding to only the central point of
intersection on the fourth line reverses its sign/direction ~\emph{(}refer to Figure~~\emph{\ref{fig:MinusOne})}.
\end{varwidth}}
\end{center}
\end{obs}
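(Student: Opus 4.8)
The plan is to reduce the statement to a two–dimensional sign computation and then match the resulting sign pattern to the betweenness of the three points on the fourth line. Write the four normals as $n_1,n_2,n_3$ (of the three lines) and $n_4$ (of the fourth line), vectors in $\mbb{F}^2$ that are pairwise independent by general position. For a pair $\{n_i,n_j\}$ the equation $n_4=\alpha\,n_i+\beta\,n_j$ has a unique solution, and by Cramer's rule $\alpha=\det(n_4,n_j)/\det(n_i,n_j)$ and $\beta=\det(n_i,n_4)/\det(n_i,n_j)$, where $\det(u,v)=u_1v_2-u_2v_1$. Making $n_4$ a \emph{positive} combination of the pair amounts to replacing $n_i$ by $-n_i$ exactly when $\alpha<0$ and $n_j$ by $-n_j$ exactly when $\beta<0$; thus the orientation that the pair $\{i,j\}$ forces on $n_i$ is the sign $\varepsilon_i^{(j)}=\operatorname{sgn}\det(n_4,n_j)\cdot\operatorname{sgn}\det(n_i,n_j)$, and symmetrically for $n_j$.

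First I would record the elementary sign lemma that turns these determinants into a combinatorial order. Representing each line through the origin by an angle on the projective line $\mbb{RP}^1$ of directions (angles $\theta\in[0,\pi)$), one has $\operatorname{sgn}\det(n_a,n_b)=\operatorname{sgn}(\theta_b-\theta_a)$, the orientation induced on $\mbb{RP}^1$. Hence each forced orientation $\varepsilon_i^{(j)}$ is a product of two such order–signs, and is therefore completely determined by the cyclic order of $\theta_1,\theta_2,\theta_3,\theta_4$.

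Next I would carry out the order analysis. After relabelling assume $\theta_1<\theta_2<\theta_3$; then $\theta_4$ lies in exactly one of the three arcs that these angles cut out of $\mbb{RP}^1$. The central point of intersection on the fourth line is the one lying between the other two on that line; reading this betweenness at infinity it is exactly the point whose line $\ell_k$ is separated from the fourth line by the other two, i.e.\ the unique index $k$ not adjacent to the arc containing $\theta_4$. A direct check of the three arcs then shows: for this $k$ the two pairs $\{k,j\}$ that contain $n_k$ force opposite orientations $\varepsilon_k^{(j)}$, so $n_k$ reverses, whereas for each of the other two indices the two pairs containing it force the same orientation. This yields simultaneously the existence of a reversing normal and the word ``only''.

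Finally I would assemble these into the stated observation, phrasing ``central'' through the very determinant signs that drive the computation, so that no separate choice of orientation on the line at infinity is needed; the $-1$ of Figure~\ref{fig:MinusOne} is precisely the reversal of $\varepsilon_k^{(j)}$. I expect the only delicate point to be this geometric identification of the central intersection point with the combinatorial position of $\theta_4$ among $\theta_1,\theta_2,\theta_3$ on $\mbb{RP}^1$ — in particular handling the wraparound arc $(\theta_3,\theta_1)$ and keeping the sign bookkeeping consistent across all three pairs; everything else is Cramer's rule together with tracking signs.
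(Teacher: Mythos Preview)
The paper does not supply a formal proof of this observation: it is introduced with the sentence ``This observation is a straight forward observation in the plane $\mbb{F}^2$ where $\mbb{F}$ is an ordered field'' and is justified only by Figure~\ref{fig:MinusOne}. So there is no argument in the paper to compare yours against line by line; you have written out what the paper leaves to the reader.

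Your determinant/Cramer setup is exactly the right reduction, and the plan of tracking the forced sign $\varepsilon_i^{(j)}$ across the three pairs is the correct mechanism for isolating the unique reversing index. Two remarks are worth making.

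First, your ``elementary sign lemma'' passes through real angles $\theta\in[0,\pi)$ and the identity $\operatorname{sgn}\det(n_a,n_b)=\operatorname{sgn}(\theta_b-\theta_a)$. Over a general ordered field $\mbb{F}$ (the setting the paper insists on) you have no angle map, so this step as written is not available. The fix is easy and does not change your strategy: define the linear order on the four projective directions directly by the signs of the six determinants $\det(n_a,n_b)$ themselves (equivalently, by slopes after a linear change of coordinates making $n_4$ a coordinate axis), and run the three-arc case split in that language. Everything you need---a total order on four distinct slopes and the multiplicativity of signs---holds in any ordered field.

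Second, the step you yourself flag as delicate, namely identifying the central point on $L_4$ with the index ``not adjacent to the arc containing $\theta_4$'', is really the content of the observation, and ``a direct check of the three arcs'' should be written out rather than asserted. One clean way, again valid over any ordered field, is to put $L_4$ in the form $y=c$ with $c$ large, write $L_i:\ y=m_ix+b_i$, and note that the $x$-coordinates of $L_i\cap L_4$ are ordered by $1/m_i$; this ties the betweenness on $L_4$ to the slope order and hence to your determinant signs without any wraparound bookkeeping. Once that link is made explicit, your sign computation closes the argument.
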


\section{\bf{Hyperplanes at infinity and an extension theorem}}
\label{sec:HIExt}
We prove below an extension theorem for an isomorphism between two isomorphic generic hyperplane arrangements 
which allows us to extend isomorphisms when we add hyperplanes at infinity under certain conditions. 
Theorem~\ref{theorem:ExtTheorem} is used later in proving Theorem~\ref{theorem:SNRT}. 

\begin{note}
Given a normal direction, there exist two parallel 
hyperplanes at infinity, with the given normal direction, on either side of the bounded intersections of the hyperplane arrangement that can possibly be added as a hyperplane at either one of the infinities to the hyperplane arrangement.
\end{note}

First we state a lemma before stating Theorem~\ref{theorem:ExtTheorem}.
\begin{lemma}
\label{lemma:ExtTheoremBaseCase}
Let $\mcl{L}^2_3=\{L_1,L_2,L_3\},\ti{\mcl{L}}^2_3=\{\ti{L}_1,\ti{L}_2,$ $\ti{L}_3\}$ be two generic line arrangements, (that is, sets of three generic lines) in the plane $\mbb{R}^2$.
Let $L_4,L_4'$ be two parallel lines at infinity in $\mbb{R}^2$ on either side of the bounded set of points of intersection $\{L_1\cap L_2,L_2\cap L_3,L_1\cap L_3\}$, giving rise to generic line arrangements 
$\mcl{L}^2_4=\{L_1,L_2,L_3,L_4\}$ and $\mcl{L}^{2'}_4=\{L_1,L_2,L_3,L_4'\}$ respectively. Let $\ti{L}_4,\ti{L}_4'$ be two parallel lines at infinity in $\mbb{R}^2$ on either side of the bounded set of points of
intersection $\{\ti{L}_1\cap \ti{L}_2,\ti{L}_2\cap \ti{L}_3,\ti{L}_1\cap \ti{L}_3\}$, giving rise to generic line arrangements $\ti{\mcl{L}}^{2}_4=\{\ti{L}_1,\ti{L}_2,\ti{L}_3,\ti{L}_4\}$ and 
$\ti{\mcl{L}}^{2'}_4=\{\ti{L}_1,\ti{L}_2,\ti{L}_3,\ti{L}_4'\}$ respectively.
Then 
\begin{enumerate}
\item For any $1\leq i\leq 3$, $L_i\cap L_4$ is the central point of intersection on $L_4$ among the three points $L_1\cap L_4,L_2\cap L_4,L_3\cap L_4$ if and only if 
$L_i\cap L_4'$ is the central point of intersection on $L_4'$ among the three points $L_1\cap L_4',L_2\cap L_4',L_3\cap L_4'$. 
\item For any $1\leq i\leq 3$, $\ti{L}_i\cap \ti{L}_4$ is the central point of intersection on $\ti{L}_4$ among the three points $\ti{L}_1\cap \ti{L}_4,\ti{L}_2\cap \ti{L}_4,\ti{L}_3\cap \ti{L}_4$ if and only if 
$\ti{L}_i\cap \ti{L}_4'$ is the central point of intersection on $\ti{L}_4'$ among the three points $\ti{L}_1\cap \ti{L}_4',\ti{L}_2\cap \ti{L}_4',\ti{L}_3\cap \ti{L}_4'$. 
\begin{figure}[h]
	\centering
	\includegraphics[width = 1.0\textwidth]{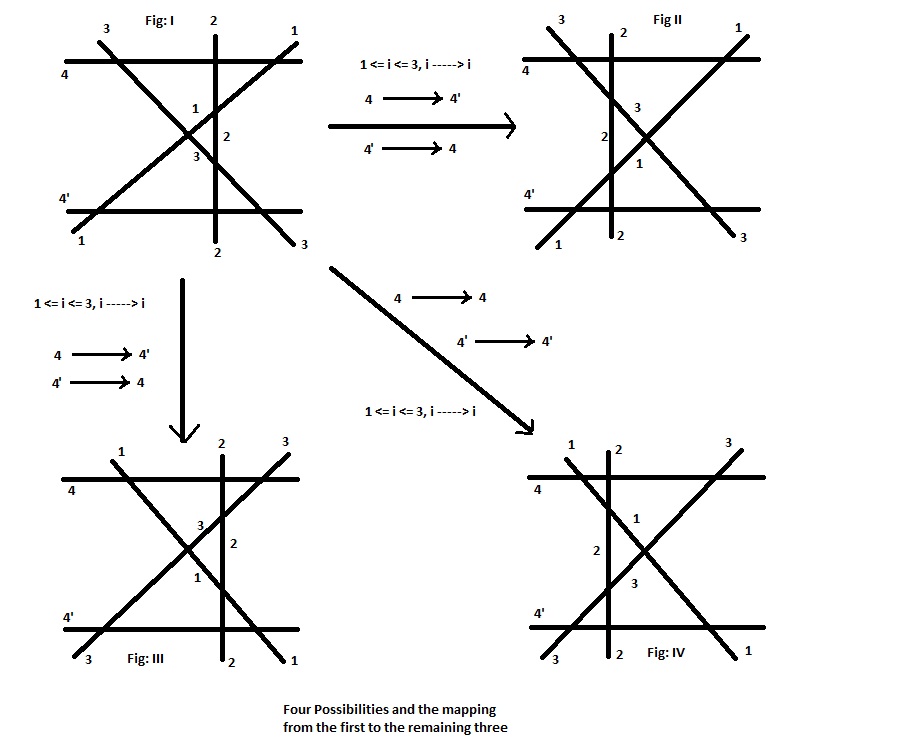}
	\caption{The Base Case Illustration of Extension Theorem}
	\label{fig:Zero}
\end{figure}
\item Suppose for some particular $1\leq j\leq 3$ we have $L_j\cap L_4$ is the central point of intersection on $L_4$ among the three points $L_1\cap L_4,L_2\cap L_4,L_3\cap L_4$
and also $\ti{L}_j\cap \ti{L}_4$ is the central point of intersection on $\ti{L}_4$ among the three points $\ti{L}_1\cap L_4,\ti{L}_2\cap \ti{L}_4,\ti{L}_3\cap \ti{L}_4$. Then we have 
\begin{itemize}
\item either $\mcl{L}^2_4 \os{\gf}{\cong} \ti{\mcl{L}}^2_4$ and $\mcl{L}^{2'}_4 \os{\psi}{\cong} \ti{\mcl{L}}^{2'}_4$ by isomorphisms $\gf,\psi$ preserving the subscripts $\{1,2,3,4\}$ of the lines in the arrangements,
\item or $\mcl{L}^2_4 \os{\gf}{\cong} \ti{\mcl{L}}^{2'}_4$ and $\mcl{L}^{2'}_4 \os{\psi}{\cong} \ti{\mcl{L}}^{2}_4$ by isomorphisms $\gf,\psi$ preserving the subscripts $\{1,2,3,4\}$ of the lines in the arrangements.
\end{itemize}
or equivalently under the isomorphisms, for $1\leq i\leq 3$, $(L_i\os{\gf,\psi}{\llra} \ti{L}_i)$ pairings occur and 
\begin{itemize}
\item either $(L_4 \os{\gf}{\llra} \ti{L}_4)$ and $(L_4' \os{\psi}{\llra} \ti{L}_4')$ pairings occur,
\item or $(L_4 \os{\gf}{\llra} \ti{L}_4')$ and $(L_4' \os{\psi}{\llra} \ti{L}_4)$ pairings occur.
\end{itemize}
\end{enumerate}
\end{lemma}

\begin{proof}
The proof of the lemma is an immediate observation about central points on all the four lines of each of the line arrangements $\mcl{L}^2_4,\mcl{L}^{2'}_4,\ti{\mcl{L}}^2_4,\ti{\mcl{L}}^{2'}_4$.
We illustrate the proof of the lemma with Figure~\ref{fig:Zero}. In this figure Fig:I can be taken as the line arrangements $\mcl{L}^2_4,\mcl{L}^{2'}_4$ and any one of Fig:II, Fig:III, Fig:IV can be taken as 
line arrangements $\ti{\mcl{L}}^{2}_4,\ti{\mcl{L}}^{2'}_4$. Finally we obtain isomorphisms as 
\equa{\text{either }&4 \llra 4, 4'\llra 4', i\llra i, 1\leq i\leq 3, Fig:IV\\
\text{or }&4 \llra 4', 4' \llra 4, i\llra i, 1\leq i\leq 3, Fig:II, Fig:III} 
The possibilities can be taken as the base case for extension Theorem~\ref{theorem:ExtTheorem}.
\end{proof} 
Here we state and prove extension Theorem~\ref{theorem:ExtTheorem}.
\begin{theorem}[Extension Theorem]
\label{theorem:ExtTheorem}
~\\
Let $m>1,n>m+1$ be two positive integers. Let 
\equ{\mcl{H}^m_{n-1}=\{H_1,H_2,\ldots,H_{n-1}\},\ti{\mcl{H}}^m_{n-1}=
\{\ti{H}_1,\ti{H}_2,\ldots,\ti{H}_{n-1}\}} be two isomorphic generic hyperplane arrangements by an isomorphism $\gf$
which takes $H_i \lra \ti{H}_i,1\leq i \leq n-1$. Now suppose 
\equ{H_n^1,H_n^2 \text{ and } \ti{H}_n^1, \ti{H}_n^2}
are two pairs of parallel hyperplanes at either infinities with respect to the arrangements
$\mcl{H}^m_{n-1}$ and $\ti{\mcl{H}}^m_{n-1}$. (Hence all the bounded intersections 
lie in the hyperplane strip spaces which is in between the two pairs of parallel hyperplanes.) 
If the induced map $\gf_{\mid_{induced}}$ of the isomorphism $\gf$ on the following two hyperplane arrangements  
\equa{^r\mcl{M}^{m-1}_{n-1}&=\{H^r_n\cap H_1,H^r_n\cap H_2,\ldots,H^r_n\cap H_{n-1}\},\\
^s\ti{\mcl{M}}^{m-1}_{n-1}&=\{\ti{H}^s_n\cap \ti{H}_1,\ti{H}^s_n\cap \ti{H}_2,\ldots,\ti{H}^s_n\cap \ti{H}_{n-1}\}}
is an isomorphism for any one $(r,s)\in \{1,2\}\times \{1,2\}$
then the isomorphism $\gf$ extends to an isomorphism  
$\ti{\gf}$ on the following two generic hyperplane arrangements 
\equ{\mcl{H}^m_{n-1}\cup \{H^{r_1}_n\},\ti{\mcl{H}}^m_{n-1}\cup \{\ti{H}^{s_1}_n\}}
which takes \equ{H_i\llra \ti{H}_i,1\leq i \leq n-1,H^{r_1}_n \llra \ti{H}^{s_1}_n}
for some choice of $(r_1,s_1)\in \{1,2\}\times \{1,2\}$. Moreover the isomorphism also extends to an isomorphism
for the complementary ordered pair $(r_2,s_2)$ of $(r_1,s_1)$ where $\{r_1,r_2\}=\{s_1,s_2\}=\{1,2\}$.
\end{theorem}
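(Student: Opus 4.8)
The plan is to reduce the entire statement to a check of central-point preservation via Theorem~\ref{theorem:PCP}, and then to dispatch that check line by line, using the base case Lemma~\ref{lemma:ExtTheoremBaseCase} for the local analysis and the connectivity of the skeleton (Theorem~\ref{theorem:ZCHA}) to glue the local data into a single global isomorphism. First I would fix a candidate extension $\ti{\gf}$ on subscripts by setting $\ti{\gf}(i)=\gf(i)$ for $1\leq i\leq n-1$ and $\ti{\gf}(H_n^{r_1})=\ti{H}_n^{s_1}$, so that on vertices it sends the new vertex $H_n^{r_1}\cap H_{i_1}\cap\cdots\cap H_{i_{m-1}}$ to $\ti{H}_n^{s_1}\cap \ti{H}_{\gf(i_1)}\cap\cdots\cap\ti{H}_{\gf(i_{m-1})}$ and agrees with $\gf$ on all old vertices. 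By Theorem~\ref{theorem:PCP} it then suffices to verify that $\ti{\gf}^{vert}$ preserves the central vertex among any three vertices lying on a common line of the augmented arrangement $\mcl{H}^m_{n-1}\cup\{H_n^{r_1}\}$.

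I would next split the lines of the augmented arrangement into two families. The lines contained in $H_n^{r_1}$ are exactly $H_n^{r_1}\cap H_{i_1}\cap\cdots\cap H_{i_{m-2}}$, and their vertices are precisely the vertices of the induced codimension-one arrangement $\mcl{M}^{m-1}_{n-1}$; central-point preservation along these lines is nothing but the hypothesis that $\gf_{\mid_{induced}}$ is an isomorphism, so I would take $(r_1,s_1)$ to be the pair $(r,s)$ for which that hypothesis is assumed. The remaining lines are the old lines $L=H_{i_1}\cap\cdots\cap H_{i_{m-1}}$, each of which acquires exactly one extra vertex $N=L\cap H_n^{r_1}$. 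Because $H_n^{r_1}$ is at infinity, $N$ is an extreme point on $L$ relative to all the bounded old vertices; hence for a triple of three old vertices preservation is inherited from $\gf$, while for a triple containing $N$ the central vertex is forced to be one of the two old vertices, and preservation reduces to a compatibility between the orientation $\gf$ induces on $L$ and the side of the bounded cluster on which $N$ sits.

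That last compatibility is a purely two-dimensional question inside each $2$-plane $H_{i_1}\cap\cdots\cap H_{i_{m-2}}$, where the old hyperplanes cut out a line arrangement and $H_n^{r_1},H_n^{r_2}$ restrict to the two parallel lines at infinity on either side of the bounded cluster. This is exactly the configuration of Lemma~\ref{lemma:ExtTheoremBaseCase}: its parts (1)--(2) show that the central-point pattern on one infinity line equals that on the other, and part (3) supplies the two admissible pairings of the infinity lines. Thus within each such $2$-plane the extension exists for one of the two side-matchings, and it exists for the complementary matching when $H_n^{r_1}$ is replaced by $H_n^{r_2}$ and $\ti{H}_n^{s_1}$ by $\ti{H}_n^{s_2}$.

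The main obstacle, and the concluding step, is to see that a single choice of $(r_1,s_1)$ works simultaneously for every $2$-plane, i.e. that the locally admissible side-matchings can be chosen coherently. Here I would invoke that the skeleton of $1$-dimensional lines of the arrangement is connected in the Zariski topology (Theorem~\ref{theorem:ZCHA}): two adjacent $2$-planes share a line of this skeleton, along which the base-case compatibilities of the two planes agree, so the matching chosen on one plane propagates unambiguously to its neighbour. Connectivity then promotes these local choices to a globally consistent one, and the only residual freedom is the overall binary flip $(r_1,s_1)\leftrightarrow(r_2,s_2)$, which yields exactly the \emph{moreover} clause asserting that the isomorphism also extends for the complementary ordered pair.
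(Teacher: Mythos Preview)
Your proposal is correct and follows essentially the same strategy as the paper: reduce to two-dimensional planes of interest $H_{i_1}\cap\cdots\cap H_{i_{m-2}}$, invoke the base case Lemma~\ref{lemma:ExtTheoremBaseCase} there to produce a local side-matching $(r_1,s_1)$ together with its complement $(r_2,s_2)$, and then use Zariski connectedness of a skeleton (Theorem~\ref{theorem:ZCHA}) to force the local matchings to coincide globally. Your explicit framing through Theorem~\ref{theorem:PCP} and the clean split of the lines of the augmented arrangement into ``lines inside $H_n^{r_1}$'' versus ``old lines with one new extreme vertex'' is in fact a tidier way to organise the verification than the paper's presentation; the one cosmetic difference is that the paper formalises the gluing step as a continuous map from the $2$-skeleton into a discrete two-point set (so it invokes Theorem~\ref{theorem:ZCHA} with $k=2$), whereas you phrase it via adjacency of $2$-planes along shared lines of the $1$-skeleton---both versions work and encode the same combinatorial fact.
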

\begin{proof}

Using the fact that the induced map $\gf_{\mid_{induced}}$ is an isomorphism and 
the original map $\gf$ is an isomorphism we prove that the order of intersections agree on all the one -dimensional lines for some choice of $(r_1,s_1)\in \{1,2\}\times \{1,2\}$ in the hyperplane arrangements 
\equ{\mcl{H}^m_{n-1}\cup \{H^{r_1}_n\},\ti{\mcl{H}}^m_{n-1}\cup \{\ti{H}^{s_1}_n\}}
under the map $\ti{\gf}$ for a suitable definition of $\ti{\gf}$. Now we make an important observation. 

To find an extension $\ti{\gf}$ and to prove this theorem we restrict our space of attention to two-dimensional planes 
of interest as follows.
Let \equ{1\leq i_1<i_2<\ldots<i_{m-2}\leq n-1<n, 1\leq i_{m-1}<i_m<i_{m+1}\leq n-1<n} 
and 
\equ{\{i_{m-1}<i_m<i_{m+1}\}\cap \{i_1<i_2<\ldots<i_{m-2}\}=\es.} 

Let  
\equa{L_{\{i_1,i_2,\ldots,i_{m-2},i_{m-1}\}}&=H_{i_1}\cap \ldots \cap H_{i_{m-2}}\cap H_{i_{m-1}}\\
L_{\{i_1,i_2,\ldots,i_{m-2},i_{m}\}}&=H_{i_1}\cap \ldots \cap H_{i_{m-2}}\cap H_{i_{m}}\\
L_{\{i_1,i_2,\ldots,i_{m-2},i_{m+1}\}}&=H_{i_1}\cap \ldots \cap H_{i_{m-2}}\cap H_{i_{m+1}}}

\equa{\ti{L}_{\{i_1,i_2,\ldots,i_{m-2},i_{m-1}\}}&=\ti{H}_{i_1}\cap \ldots \cap \ti{H}_{i_{m-2}}\cap \ti{H}_{i_{m-1}}\\
\ti{L}_{\{i_1,i_2,\ldots,i_{m-2},i_{m}\}}&=\ti{H}_{i_1}\cap \ldots \cap \ti{H}_{i_{m-2}}\cap \ti{H}_{i_{m}}\\
\ti{L}_{\{i_1,i_2,\ldots,i_{m-2},i_{m+1}\}}&=\ti{H}_{i_1}\cap \ldots \cap \ti{H}_{i_{m-2}}\cap \ti{H}_{i_{m+1}}}

be the corresponding triples of lines with the corresponding two-dimensional planes of interest being
\equ{H_{i_1}\cap H_{i_2} \cap \ldots \cap H_{i_{m-2}}, \ti{H}_{i_1}\cap \ti{H}_{i_2} \cap \ldots \cap \ti{H}_{i_{m-2}}}

For $(r,s)\in \{1,2\}\times \{1,2\}$ the pairs of parallel hyperplanes at infinity gives rise to lines at infinity 
in the two-dimensional planes of interest respectively. The lines are given by
\equa{L^r_{\{i_1,i_2,\ldots,i_{m-2},n\}}&=H_{i_1}\cap H_{i_2} \cap \ldots \cap H_{i_{m-2}} \cap H^r_n, r\in \{1,2\}\\
\ti{L}^s_{\{i_1,i_2,\ldots,i_{m-2},n\}}&=\ti{H}_{i_1}\cap \ti{H}_{i_2} \cap \ldots \cap \ti{H}_{i_{m-2}}
\cap \ti{H}^s_n, s\in \{1,2\}}

\begin{figure}[h]
\centering
\includegraphics[width = 1.0\textwidth]{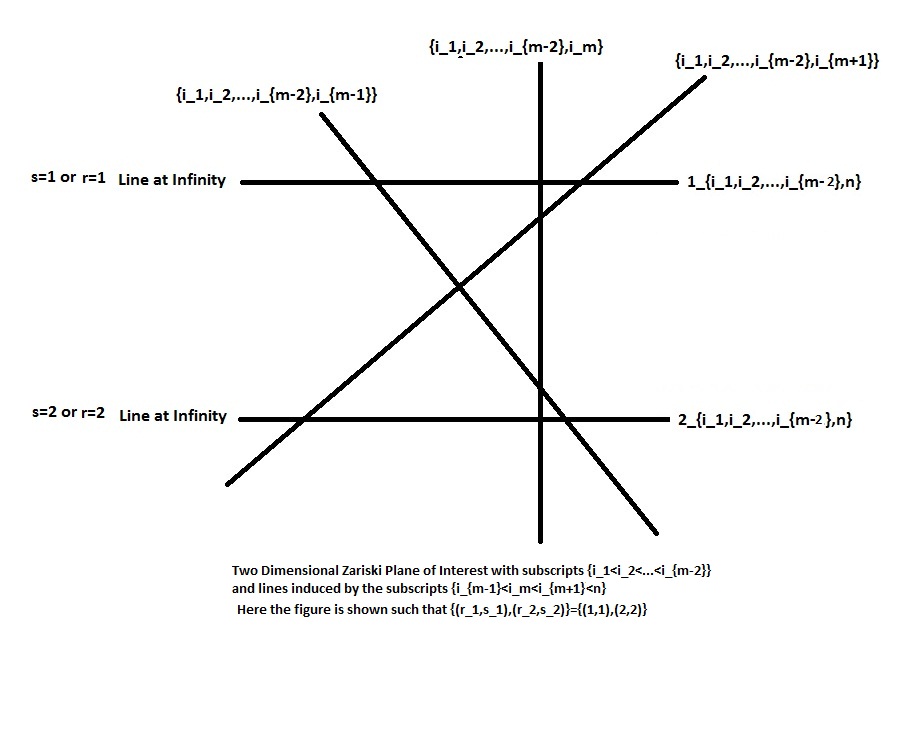}
\caption{Two-dimensional zariski plane of interest $\{i_1<i_2<\ldots<i_{m-2}\}$}
\label{fig:One}
\end{figure}

Now consider Figure~\ref{fig:One}. As we have both isomorphisms $\gf$ for points of the lines not on the new hyperplanes and $\gf_{induced}$ for points on the lines of the new hyperplanes $H_n,\ti{H}_n$
we can use Lemma~\ref{lemma:ExtTheoremBaseCase} on each of the two-dimensional planes of interest to obtain an isomorphic pairing $(r_1,s_1)\in \{1,2\}\times \{1,2\}$ 
and its complementary pair $(r_2,s_2) \in \{1,2\}\times \{1,2\}$. This can be obtained coherently depending on $\{i_1<\ldots<i_{m-2}\}$, that is, the two-dimensional zariski plane of interest and independent of 
the subscripts $\{i_{m-1}<i_m<i_{m+1}\}$.

Now the only possible ambiguity is whether the pair $(r_1,s_1)$ and the complementary pair $(r_2,s_2)$ is the same for all two-dimensional planes of interest. There are two possible choices.
$\{(r_1,s_1),(r_2,s_2)\}=\{(1,1),(2,2)\}$ or $\{(r_1,s_1),(r_2,s_2)\}=\{(1,2),(2,1)\}$.
To remove this ambiguity we use continuity and connectedness arguments. We use usual topology over the field of reals. Otherwise we 
use zariski topology over infinite fields in particular over an ordered field.

We obtain the same pair $(r_1,s_1)$ and its complementary pair $(r_2,s_2)$ for all two-dimensional planes of interest, because, the map which takes union of
zariksi planes of interest, the skeleton of two-dimensional planes of the hyperplane arrangement to the set with discrete topology containing two 
elements which are complementary extension pairs \equ{\{\{(1,1),(2,2)\},\{(1,2),(2,1)\}\}} is continuous. This is because
\begin{itemize}
\item it is easy for the reader to note that the map agrees (patches up) on the intersection of two such planes (if they intersect) which is a line of the 
arrangement, since the isomorphic pairing $(r_1,s_1),(r_2,s_2)$ is the same for all two-dimensional zariski planes of interest $\{i_1<i_2<\ldots<i_{m-2}\}$
which contains a fixed line of the arrangement say $\{j_1<j_2<\ldots<j_{m-1}\} \sups \{i_1<i_2<\ldots<i_{m-2}\}$.

\item Inverse image of a single point is a finite union of two-dimensional zariski planes and hence it is closed.
\item Moreover the union of planes of 
interest of the arrangement is connected (point set topological sense) in zariski topology (refer to 
Theorem~\ref{theorem:ZCHA} with $k=2$).
\end{itemize}
In Figure~\ref{fig:One} for any choice of plane of 
interest we have either $(r_1,s_1)=(1,1)$ or $(r_1,s_1)=(2,2)$ and not $(1,2),(2,1)$. This proves exactly the statement 
of the theorem.
\end{proof}

\section{\bf{Existence of orthogonal projections over ordered fields}}
\label{sec:EOP}
~\\
This section is for those readers who are interested in the results of this article over ordered fields other than $\mbb{R}$. This section can be skipped otherwise.
Here we prove the existence of certain projections which will be useful to the proof of the main theorem in the next section.
We note orthogonal projections exist over ordered fields even though square roots of a general positive element need not be in the field.
Let $\mbb{F}$ be an ordered field. Let $v^i=(x_1^i,x_2^i,\ldots,x_n^i)^t, 1\leq i\leq k$ be any finite set of 
linearly independent vectors in $\mbb{F}^n$ for $k\leq n$ spanning a given subspace. 
Define a linear transformation $T$ given as follows. 
\equ{T:\mbb{F}^n\lra \mbb{F}^k \text{ where }[T]_{k\times n}=[x_i^j]_{1\leq i\leq n,1\leq j\leq k}}
Now we have $\Ker(T)=<v^i:1\leq i\leq k>^{\perp}$. We have row rank of $T$ is $k$. Since 
\equ{row\operatorname{-}rank(T)=col\operatorname{-}rank(T), Rank + Nullity = n}
we have $\dim(\Ker(T))=n-k$.
Define on $\mbb{F}^m$ with $m>0$ a positive integer,
\equ{<v=(x_1,x_2,\ldots,x_m),w=(y_1,y_2,\ldots,y_m)>_{\mbb{F}^m}=\us{i=1}{\os{m}{\sum}}x_iy_i.}
This is a symmetric bilinear form with the property that 
\begin{itemize}
\item $<v,v>_{\mbb{F}^m}\  \geq 0$ for $v\in \mbb{F}^m$.
\item $<v,v>_{\mbb{F}^m}=0 \Llra v=0$.
\end{itemize}
Then for $w_1\in \mbb{F}^n,w_2\in \mbb{F}^k$
\equ{<Tw_1,w_2>_{\mbb{F}^k}=w_2^tTw_1=w_1^tT^tw_2=<w_1,T^tw_2>_{\mbb{F}^n}.}
Now we observe that if $w_1\in \Ker(T) \Llra <w_1,T^tw_2>_{\mbb{F}^n}=0$ for all $w_2\in \mbb{F}^k$. 
So we conclude that 
\equ{\Ker(T)^{\perp}=\Ran(T^t)=Span<v^i:1\leq i\leq k>.}
So we conclude that 
\equ{\Ker(T)\bigoplus \Ran(T^t)=\mbb{F}^n.}
We define the orthogonal projections as $P,Q:\mbb{F}^n\lra \mbb{F}^n$ such that \equ{P_{\mid_{\Ker(T)}}=0,
P_{\mid_{\Ran(T^t)}}=Id,Q_{\mid_{\Ker(T)}}=Id,Q_{\mid_{\Ran(T^t)}}=0.}
These projections satisfy the following relations. \equ{I=P+Q,P^2=P,Q^2=Q,P^t=P,Q^t=Q, \text{ that is}}
\equ{<Pw_1,w_2>_{\mbb{F}^n}=<w_1,Pw_2>_{\mbb{F}^n},<Qw_1,w_2>_{\mbb{F}^n}=<w_1,Qw_2>_{\mbb{F}^n}}
for $w_1,w_2\in \mbb{F}^n$. This proves the existence of orthogonal projections.

\section{\bf{Proof of the main theorem}}
\label{sec:PMTA}
In this section we prove main Theorem~\ref{theorem:NRTMT} by proving Theorem~\ref{theorem:SNRT}.
Here we prove both the following implications.
\equ{\text{Normal Systems are isomorphic }\Llra \text{ Theorem~\ref{theorem:SNRT} holds.}}
\begin{proof}[Proof $\Ra$]
We assume that the infinity permutation of the infinity hyperplane arrangement $(\mcl{H}^n_m)_1$ is identity by renumbering the subscripts.
Suppose the normal systems $\mcl{U}_1,\mcl{U}_2$ are isomorphic. Let $\gd:\mcl{U}_1 \lra \mcl{U}_2$ 
be a convex positive bijection. Without loss generality let us assume that $\gd$ induces trivial permutation, 
that is, identity on subscripts by renumbering the subscripts of the arrangement $(\mcl{H}^n_m)_2$ if necessary. Suppose, using the bijection, we have constructed isomorphic infinity type (generic) hyperplane arrangements, with identity infinity permutations (refer to Definition~\ref{defn:InfinityArrangements}), given by 
\equ{(\mcl{H}^{m}_{l-1})_1=\{H^1_1,H^1_2,\ldots,H^1_{l-1}\},
(\mcl{H}^{m}_{l-1})_2=\{H^2_1,H^2_2,\ldots,H^2_{l-1}\},}
for $l>m+1$. Note that for $l-1\leq m+1$ these two hyperplane arrangements are isomorphic by an 
isomorphism which is identity on the subscripts. Then we add, hyperplanes at infinity $H^1_l$ and $H^2_l$,
whose subscripts correspond to each other under the bijection $\gd$, to the arrangements 
$(\mcl{H}^{m}_{l-1})_1,(\mcl{H}^{m}_{l-1})_2$ respectively.
Now we prove the following. The induced generic hyperplane arrangements
\equa{(\mcl{M}^{m}_{l-1})_1&=\{H^1_1\cap H^1_l,H^1_2\cap H^1_l,\ldots,H^1_{l-1}\cap H^1_l\}\\
(\mcl{M}^{m}_{l-1})_2&=\{H^2_1\cap H^2_l,H^2_2\cap H^2_l,\ldots,H^2_{l-1}\cap H^2_l\}}
are isomorphic again by an isomorphism which is identity on the subscripts.

Before we prove this we define the following. First we observe that for $i=1,2$ any zero-dimensional vertex on a line
of the arrangement in the hyperplane $H^i_l$ is an intersection of a line of the arrangement $(\mcl{M}^{m}_{l-1})_i$ which is contained in $H^i_l$ and a line of the arrangement $(\mcl{H}^{m}_{l-1})_i$
which is not contained in the hyperplane $H^i_l$. For every line 
\equ{H^i_{k_1}\cap H^i_{k_2} \cap \ldots \cap H^i_{k_{m-1}},1\leq k_1<k_2<\ldots<k_{m-1}\leq l-1}
not in the hyperplane $H^i_l$ we associate by choosing a direction vector 
\equ{n^i_{\{k_1,k_2,\ldots,k_{m-1}\}}}
which is outward pointing on the other side of the bounded intersections of $(\mcl{H}^{m}_{l-1})_i$ and 
which makes a positive dot product with an outward normal of $H^i_l$ which is also on the other side. 
The positive dot product is obtained by evaluating the linear functional of the outward normal
of $H^i_l$ at normals $n^i_{\{k_1,k_2,\ldots,k_{m-1}\}}$. 
Now we prove the following claim.
\begin{claim}
If there exists a convex positive bijection $\gd:\mcl{U}_1\lra \mcl{U}_2$ which is identity on the subscripts then 
the map of the directions 
\equ{\{n^1_{\{k_1,k_2,\ldots,k_{m-1}\}}\mid 1\leq k_1<k_2<\ldots<k_{m-1}\leq l-1\}} to the directions 
\equ{\{n^2_{\{k_1,k_2,\ldots,k_{m-1}\}}\mid 1\leq k_1<k_2<\ldots<k_{m-1}\leq l-1\}}
taking 
\equ{n^1_{\{k_1,k_2,\ldots,k_{m-1}\}} \lra n^2_{\{k_1,k_2,\ldots,k_{m-1}\}}}
with the respective subscript satisfies the convexity triple property for the lines, that is,
if $A,B,C$ denote three subsets of $\{1,2,\ldots,l-1\}$ each of cardinality $m-1$ given by 
\equa{A&=\{j_1<j_2<\ldots<j_{m-2}\}\cup\{j_{m-1}\},\\
C&=\{j_1<j_2<\ldots<j_{m-2}\}\cup\{j_m\},\\
B&=\{j_1<j_2<\ldots<j_{m-2}\}\cup \{j_{m+1}\}} 
then \equa{&n^1_{C}=a_1 n^1_{A} + b_1 n^1_{B} \text{ for some } a_1>0,b_1>0\\
&\Llra n^2_{C}=a_2 n^2_{A} + b_2 n^2_{B} \text{ for some } a_2>0,b_2>0.}
\end{claim}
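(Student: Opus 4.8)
The plan is to localise the whole statement to the two-dimensional plane determined by the common indices $\{j_1<\cdots<j_{m-2}\}$ and then transport the resulting positivity relation across the convex positive bijection $\gd$. Write $v^i_k$ for the chosen normal of $H^i_k$ in $\mcl{U}_i$, and set $P_i=H^i_{j_1}\cap\cdots\cap H^i_{j_{m-2}}$, a two dimensional affine plane whose direction space is $W_i=\langle v^i_{j_1},\ldots,v^i_{j_{m-2}}\rangle^{\perp}$. By the construction of Section~\ref{sec:EOP} the orthogonal projection $\pi_i\colon\mbb{F}^m\lra W_i$ is available even though $\mbb{F}$ need not contain the relevant square roots. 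Each direction $n^i_A,n^i_B,n^i_C$ already lies in $W_i$, and $n^i_A$ is orthogonal to the projected normal $\bar v^i_{j_{m-1}}=\pi_i(v^i_{j_{m-1}})$, with the analogous statements for $B,C$. First I would record the planar duality: on $W_i$ there is a linear isomorphism $J_i$ with $\langle J_iu,u\rangle_{W_i}=0$ for all $u$ (the metric rotation, available over any ordered field since the form is positive definite), so that $n^i_A$ is a scalar multiple of $J_i\bar v^i_{j_{m-1}}$, and likewise for $B,C$. As $J_i$ is a linear isomorphism it carries positive cones to positive cones, so, after normalising the antipodal representatives so that these scalars are positive, the relation $n^i_C=a\,n^i_A+b\,n^i_B$ with $a,b>0$ is equivalent to $\bar v^i_{j_{m+1}}=c\,\bar v^i_{j_{m-1}}+d\,\bar v^i_{j_m}$ with $c,d>0$.

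The second step is to read off these two planar coefficients from a full space expansion. Since $\{v^i_{j_1},\ldots,v^i_{j_{m-2}},v^i_{j_{m-1}},v^i_{j_m}\}$ is a base of $\mbb{F}^m$ (any $m$ normals of a normal system are linearly independent), I would expand
\equ{v^i_{j_{m+1}}=\us{t=1}{\os{m-2}{\sum}}c^i_t\,v^i_{j_t}+c^i_{m-1}\,v^i_{j_{m-1}}+c^i_m\,v^i_{j_m}.}
Applying $\pi_i$ annihilates every term $v^i_{j_t}$ with $t\leq m-2$, since these lie in $\ker\pi_i$, leaving $\bar v^i_{j_{m+1}}=c^i_{m-1}\,\bar v^i_{j_{m-1}}+c^i_m\,\bar v^i_{j_m}$. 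Hence the planar positivity from the first paragraph is exactly the positivity of the two base coefficients $c^i_{m-1},c^i_m$, and the convexity triple property for the directions is equivalent to the statement that $c^1_{m-1},c^1_m>0$ holds if and only if $c^2_{m-1},c^2_m>0$.

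The third step transports this across $\gd$. The bijection controls the sign of every coefficient in a base expansion, not just two, so I would first use $\gd(-u)=-\gd(u)$ to replace each common normal $v^i_{j_t}$ with $t\leq m-2$ by its antipode whenever necessary, arranging $c^i_t>0$; this changes neither $W_i$, nor $\pi_i$, nor the two coefficients $c^i_{m-1},c^i_m$. Now assuming $c^1_{m-1},c^1_m>0$, every coefficient in the re-signed expansion of $v^1_{j_{m+1}}$ is positive, so by the convex positive bijection every coefficient in the expansion of $\gd(v^1_{j_{m+1}})=v^2_{j_{m+1}}$ in the image base is positive as well; projecting by $\pi_2$ recovers $c^2_{m-1},c^2_m>0$, hence $n^2_C=a_2\,n^2_A+b_2\,n^2_B$ with $a_2,b_2>0$. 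Running the same argument with $\gd^{-1}$, which is again a convex positive bijection, yields the reverse implication and so the claimed equivalence.

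The delicate point, and the one I expect to be the main obstacle, is the bookkeeping of signs: matching the geometric outward convention defining $n^i_A,n^i_B,n^i_C$ with the antipodal representatives that $\gd$ actually pairs, consistently in both arrangements, so that a positive planar coefficient genuinely corresponds to membership in the correct pointed cone rather than its reflection. This is precisely what Observation~\ref{obs:ChangeSign} supplies: it identifies, intrinsically in each plane $W_i$, which of the three lines is central by the sign reversal forced when the normal of the hyperplane at infinity $H^i_l$ is written as a positive combination of a pair of the three line normals. Since centrality is a combinatorial invariant carrying no sign ambiguity, it is the quantity that must be preserved; the observation expresses it through a positive combination of normals, and that combination is exactly the data $\gd$ transports. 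I would therefore finish by checking that the outward normalisation of the $n^i$ and the representative selected by $\gd$ agree up to the single global orientation available on each $W_i$, so that centrality in the two systems coincides, which is the content of the claim.
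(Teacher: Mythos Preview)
Your overall localisation idea is sound, but the reduction in your first paragraph does not hold as written, and the gap is exactly the one you flag at the end without closing.

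Concretely: from $n^i_A\perp\bar v^i_{j_{m-1}}$, $n^i_B\perp\bar v^i_{j_{m+1}}$, $n^i_C\perp\bar v^i_{j_m}$ and the rotation $J_i$, the relation $n^i_C\in\text{cone}(n^i_A,n^i_B)$ pulls back to $\pm\bar v^i_{j_m}\in\text{cone}(\pm\bar v^i_{j_{m-1}},\pm\bar v^i_{j_{m+1}})$ for signs determined by the scalars $\gl_A,\gl_B,\gl_C$, not to $\bar v^i_{j_{m+1}}\in\text{cone}(\bar v^i_{j_{m-1}},\bar v^i_{j_m})$ as you wrote. So (i) you have expanded the wrong vector in step two (it should be $v^i_{j_m}$, not $v^i_{j_{m+1}}$), and (ii) even with the correct index, your normalisation ``choose the antipodal representative so that $\gl_A,\gl_B,\gl_C>0$'' is made separately for $i=1$ and $i=2$ using the outward convention tied to $n^i_l$ and the chosen $J_i$, and you never show these choices coincide under $\gd$. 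If they do not, the convex positive bijection in step three is being applied to expansions that do not correspond under $\gd$, and the argument collapses. Your final paragraph says this is the delicate point and gestures at Observation~\ref{obs:ChangeSign}, but the check you promise (``the outward normalisation of the $n^i$ and the representative selected by $\gd$ agree up to the single global orientation'') is the entire content of the claim and is not carried out.

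The paper avoids this circularity by never relating $n^i_A,n^i_B,n^i_C$ to the $\bar v$'s via $J_i$. Instead it expands $n^i_l$ itself, which is a member of $\mcl{U}^l_i$ and hence is directly transported by $\gd$, in the three bases $\{n^i_{j_1},\ldots,n^i_{j_{m-2}}\}\cup\{n^i_{j_{m-1}},n^i_{j_m}\}$, $\{n^i_{j_1},\ldots,n^i_{j_{m-2}}\}\cup\{n^i_{j_m},n^i_{j_{m+1}}\}$, $\{n^i_{j_1},\ldots,n^i_{j_{m-2}}\}\cup\{n^i_{j_{m-1}},n^i_{j_{m+1}}\}$. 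Because $\gd$ is convex positive, every coefficient sign in these three expansions matches between $i=1$ and $i=2$ with no normalisation step needed. Projecting to $W_i$ gives three positive-coefficient expressions for the projected $n^i_l$; Observation~\ref{obs:ChangeSign} then says that among the three projected hyperplane normals, exactly one reverses sign between its two appearances, and that one labels the central intersection point on the line at infinity, which in turn is equivalent to the convexity triple relation for $n^i_A,n^i_B,n^i_C$. Since the sign patterns match for $i=1,2$, so does the central label. The key difference from your route is that the vector being expanded ($n^i_l$) already lives in $\mcl{U}_i$, so $\gd$ acts on the whole expansion without any auxiliary sign matching.
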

\begin{proof}[Proof of Claim]
We observe that for $i=1,2$ the vectors $n^i_A,n^i_B,n^i_C$ span a two-dimensional space. 
Hence there exist coefficients $a_i,b_i\in \mbb{R}^{*}=\mbb{R}\bs \{0\}$ such that 
$a_in^i_A+b_in^i_B=n^i_C$. For $i=1,2$ let 
\equ{\mcl{U}^l_i=\{\pm n^i_1, \pm n^i_2,\ldots, \pm n^i_l\} \subs \mcl{U}_i,\gd:\mcl{U}^l_1\lra \mcl{U}^l_2,
\gd(n^1_j)=n^2_j,1\leq j \leq l} 
be the sets of antipodal pairs of normal vectors for the hyperplane arrangements $\{H^i_1,H^i_2\ldots,H^i_l\},i=1,2$
respectively with the bijection $\gd$ being identity on subscripts. Let $n^i_l$ be the outward normal of $H^i_l$.
Consider the following system of equations for $i=1,2$.
\equa{n_l^i&=\us{k=1}{\os{m-2}{\sum}}x^k_in^i_{j_k}+ x^{m-1}_in^i_{j_{m-1}} + x^{m}_in^i_{j_{m}}\\
n_l^i&=\us{k=1}{\os{m-2}{\sum}}y^k_in^i_{j_k}+ y^{m}_in^i_{j_{m}} + y^{m+1}_in^i_{j_{m+1}}\\
n_l^i&=\us{k=1}{\os{m-2}{\sum}}z^k_in^i_{j_k}+ z^{m-1}_in^i_{j_{m-1}} + z^{m+1}_in^i_{j_{m+1}}}
Now we have for \equ{1\leq k \leq m-2, sign(x^k_1)=sign(x^k_2),sign(y^k_1)=sign(y^k_2),sign(z^k_1)=sign(z^k_2)} 
and also signs are the same for the remaining respective coefficients as well for $i=1,2$.

For $i=1,2$ let $P^i_{\{j_1,j_2,\ldots,j_{m-2}\}}$ be the orthogonal projection onto the two-dimensional space $V^i$ with bases given by 
\equ{\{n^i_A,n^i_B\}\text{ or }\{n^i_B,n^i_C\} \text{ or }\{n^i_A,n^i_C\}} and whose kernel $U^i$ is spanned by $\{n^i_{j_1},n^i_{j_2},\ldots,n^i_{j_{m-2}}\}$. We have
\equ{\mbb{R}^m=U^i\oplus V^i,i=1,2, P^i_{\{j_1,j_2,\ldots,j_{m-2}\}}:\mbb{R}^m \twoheadrightarrow V^i }
We have the following projected equations of vectors.
\equan{Three}{&P^i_{\{j_1,j_2,\ldots,j_{m-2}\}}(n_l^i)=\\
&x^{m-1}_i P^i_{\{j_1,j_2,\ldots,j_{m-2}\}}(n^i_{j_{m-1}}) + 
x^{m}_iP^i_{\{j_1,j_2,\ldots,j_{m-2}\}}(n^i_{j_{m}})=\\
&y^{m}_i P^i_{\{j_1,j_2,\ldots,j_{m-2}\}}(n^i_{j_{m}}) + 
y^{m+1}_iP^i_{\{j_1,j_2,\ldots,j_{m-2}\}}(n^i_{j_{m+1}})=\\
&z^{m-1}_i P^i_{\{j_1,j_2,\ldots,j_{m-2}\}}(n^i_{j_{m-1}}) + 
z^{m+1}_iP^i_{\{j_1,j_2,\ldots,j_{m-2}\}}(n^i_{j_{m+1}})}
which have the coefficients with the same sign for $i=1,2$. 
We refer to Figure~\ref{fig:Two} for an illustrative example.
Here we observe the following orthogonality conditions (perpendicularity conditions in Figure~\ref{fig:Two}).
\equa{n^i_A.n^i_{j_{m-1}}=0 &\Ra n^i_A.P^i_{\{j_1,j_2,\ldots,j_{m-2}\}}(n^i_{j_{m-1}})=0,\\
n^i_C.n^i_{j_{m}}=0 &\Ra n^i_C.P^i_{\{j_1,j_2,\ldots,j_{m-2}\}}(n^i_{j_{m}})=0,\\
n^i_B.n^i_{j_{m+1}}=0 &\Ra n^i_B.P^i_{\{j_1,j_2,\ldots,j_{m-2}\}}(n^i_{j_{m+1}})=0}

\begin{figure}[h]
\centering
\includegraphics[width = 1.0\textwidth]{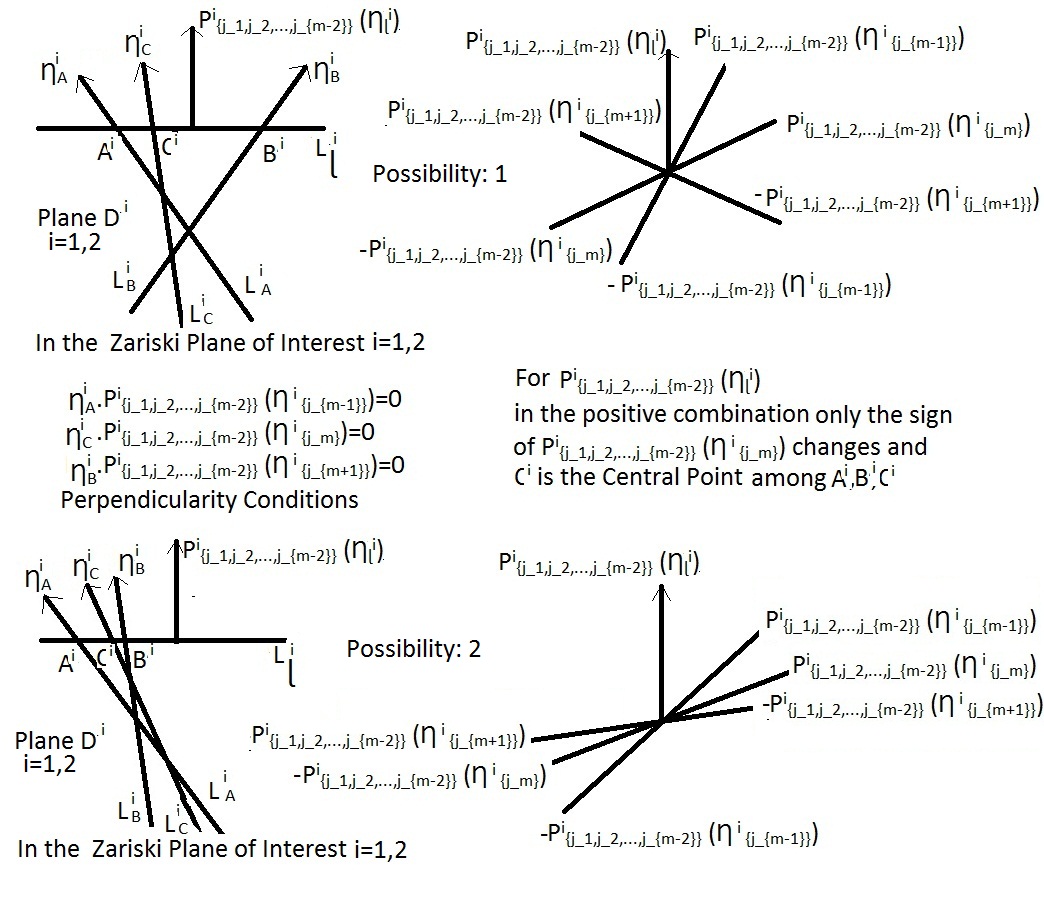}
\caption{Two-dimensional zariski plane of interest $\{j_1<j_2<\ldots<j_{m-2}\}$}
\label{fig:Two}
\end{figure}

In the right hand side of three equations~\ref{Eq:Three} each of the vectors $n^i_{j_{m-1}},n^i_{j_m},n^i_{j_{m+1}}$ appear twice.
Now we write the three equations individually with a suitable choice of signs $(sign)_t\in \{\pm 1\},1\leq t\leq 6$ with 
\equa{&\mid x^{m-1}_i\mid = (sign)_1x^{m-1}_i,\mid x^{m}_i\mid=(sign)_2x^{m}_i,\mid y^{m}_i \mid=(sign)_3y^{m}_i,\\
&\mid y^{m+1}_i \mid=(sign)_4y^{m+1}_i,\mid z^{m-1}_i \mid=(sign)_5z^{m-1}_i,\mid z^{m+1}_i \mid=(sign)_6z^{m+1}_i} for $n^i_{j_{m-1}},n^i_{j_m},n^i_{j_{m+1}}$
such that the coefficients are positive, that is,  
\equan{ThreeMod}{&P^i_{\{j_1,j_2,\ldots,j_{m-2}\}}(n_l^i)=\\&\mid x^{m-1}_i\mid  P^i_{\{j_1,j_2,\ldots,j_{m-2}\}}((sign)_1 n^i_{j_{m-1}}) + 
\mid x^{m}_i\mid P^i_{\{j_1,j_2,\ldots,j_{m-2}\}}((sign)_2 n^i_{j_{m}})=\\
&\mid y^{m}_i \mid P^i_{\{j_1,j_2,\ldots,j_{m-2}\}}((sign)_3 n^i_{j_{m}}) + 
\mid y^{m+1}_i \mid P^i_{\{j_1,j_2,\ldots,j_{m-2}\}}((sign)_4 n^i_{j_{m+1}})=\\
&\mid z^{m-1}_i \mid P^i_{\{j_1,j_2,\ldots,j_{m-2}\}}((sign)_5 n^i_{j_{m-1}}) + 
\mid z^{m+1}_i \mid P^i_{\{j_1,j_2,\ldots,j_{m-2}\}}((sign)_6 n^i_{j_{m+1}})\\}

We look for which of these three vectors $n^i_{j_{m-1}},n^i_{j_m},n^i_{j_{m+1}}$ changes its sign in $\{(sign)_1\lra (sign)_5\}$, $\{(sign)_2\lra (sign)_3\},\{(sign)_4\lra (sign)_6\}$  while 
appearing twice in these three equations~\ref{Eq:ThreeMod}. Here more importantly since $\gd$ is a convex positive bijection consistency is maintained for $i=1,2$,
that is, we have the same suitable choice of signs of vectors for the three equations as they correspond
bijectively by $\gd$.

We have reduced to the two-dimensional scenario in the plane of interest $H^i_{j_1}\cap H^i_{j_2}\cap \ldots \cap H^i_{j_{m-2}}(=D^i \text{ in Figure~\ref{fig:Two}})$
for $i=1,2$. Here we use Observation~\ref{obs:ChangeSign} to conclude that only the sign of 
the vector corresponding to central point changes when $i=1$ and when $i=2$  
These corresponding subscripts agree for $i=1,2$. More elaborately for $i=1,2$, the lines are given by 
\equa{L^i_A=L^i_{\{j_1,j_2,\ldots,j_{m-2},j_{m-1}\}}&=H^i_{j_1}\cap H^i_{j_2}\cap \ldots \cap H^i_{j_{m-2}}\cap H^i_{j_{m-1}}\\
L^i_C=L^i_{\{j_1,j_2,\ldots,j_{m-2},j_{m}\}}&=H^i_{j_1}\cap H^i_{j_2}\cap \ldots \cap H^i_{j_{m-2}}\cap H^i_{j_{m}}\\
L^i_B=L^i_{\{j_1,j_2,\ldots,j_{m-2},j_{m+1}\}}&=H^i_{j_1}\cap H^i_{j_2}\cap \ldots \cap H^i_{j_{m-2}}\cap H^i_{j_{m+1}}}
and the points of intersection of these three lines are given by
\equa{& H^i_{j_1}\cap H^i_{j_2}\cap \ldots \cap H^i_{j_{m-2}}\cap H^i_{j_{m-1}}\cap H^i_{j_{m}}\\
& H^i_{j_1}\cap H^i_{j_2}\cap \ldots \cap H^i_{j_{m-2}}\cap H^i_{j_{m-1}}\cap H^i_{j_{m+1}}\\
& H^i_{j_1}\cap H^i_{j_2}\cap \ldots \cap H^i_{j_{m-2}}\cap H^i_{j_{m}}\cap H^i_{j_{m+1}}}

and with the same notation as in the claim, the line at infinity is
\equ{L^i_{\{j_1,j_2,\ldots,j_{m-2},l\}}=H^i_{j_1}\cap H^i_{j_2}\cap \ldots \cap H^i_{j_{m-2}}\cap H^i_l (=L^i_l \text{ in Figure~\ref{fig:Two}})}
which contains the points 
\equa{P^i_{A\cup\{l\}}&=H^i_{j_1}\cap H^i_{j_2}\cap \ldots \cap H^i_{j_{m-2}}\cap H^i_{j_{m-1}}\cap H^i_l (=A^i\text{ in Figure~\ref{fig:Two}})\\
P^i_{C\cup\{l\}}&=H^i_{j_1}\cap H^i_{j_2}\cap \ldots \cap H^i_{j_{m-2}}\cap H^i_{j_{m}}\cap H^i_l (=C^i \text{ in Figure~\ref{fig:Two}})\\
P^i_{B\cup\{l\}}&=H^i_{j_1}\cap H^i_{j_2}\cap \ldots \cap H^i_{j_{m-2}}\cap H^i_{j_{m+1}}\cap H^i_l (=B^i \text{ in Figure~\ref{fig:Two}})}
in the plane of interest
\equ{H^i_{j_1}\cap H^i_{j_2}\cap \ldots \cap H^i_{j_{m-2}} (=D^i \text{ in Figure~\ref{fig:Two}}),i=1,2.}
We observe that 
\equa{&P^1_{C\cup\{l\}} \text{ is in between } P^1_{A\cup\{l\}} \text{ and }P^1_{B\cup\{l\}}\\ 
&\Llra P^2_{C\cup\{l\}} \text{ is in between } P^2_{A\cup\{l\}} \text{ and }P^2_{B\cup\{l\}}.}
We also have that $n^i_C=a_in^i_A+b_in^i_B$ with $a_i>0,b_i>0$ if and only if $P^i_{C\cup\{l\}}$ is in between 
$P^i_{A\cup\{l\}}$ and $P^i_{B\cup\{l\}}$.
Hence the claim follows. 
\end{proof}
This claim also proves that there is an isomorphism between the codimension-one arrangements on the hyperplanes 
at infinity $H^i_{l},i=1,2$ which is identity on the subscripts using Theorem~\ref{theorem:PCP}.
Now we use extension Theorem~\ref{theorem:ExtTheorem} to conclude the induction step.
\end{proof}

\begin{proof}[$\La$ Proof]
We prove the other way implication. Suppose there exists an isomorphism between
the infinity type hyperplane arrangements $(\mcl{H}_n^m)_1=\{H^1_1,H^1_2,\ldots,$ $H^1_n\}$,
$(\mcl{H}_n^m)_2=\{H^2_1,H^2_2,\ldots,H^2_n\}$ which is identity on the subscripts after renumbering. We now suppose after further renumbering the subscripts of both the hyperplane arrangements that, the infinity permutations (refer to Definition~\ref{defn:InfinityArrangements}) for both of them is the identity permutation.
Let $\mcl{U}_1=\{\pm v^1_1,\pm v^1_2,\ldots,\pm v^1_n\},\mcl{U}_2=\{\pm v^2_1,\pm v^2_2,\ldots,\pm v^2_n\}$ 
be the corresponding sets containing a pair of normal antipodal vectors 
then we have to prove that, there exists $\gd:\mcl{U}_1\lra \mcl{U}_2$ which is identity on the subscripts and which 
is a convex positive bijection. 

Let us choose an outward pointing normal vector $n^i_j,\ 1\leq j \leq m+1$ for 
$H^i_1,H^i_2,\ldots,$ $H^i_{m+1}$ with respect to the $m\operatorname{-}$dimensional simplex polyhedrality 
$\Gd^mH^i_1H^i_2\ldots H^i_{m+1}$ and then an outward pointing normal vector $n^i_l$ for 
$H^i_l, n\geq l \geq m+2$ on the other side of the zero-dimensional vertices of the earlier arrangement for $i=1,2$ at each stage.

We prove the following claim.
\begin{claim}
Consider for $1\leq k_1<k_2<\ldots<k_m<k_{m+1}\leq l$ a set of $m+1$ hyperplanes 
\equ{H^i_{k_1},H^i_{k_2},\ldots,H^i_{k_m},H^i_{k_{m+1}},i=1,2} and their normal vectors \equ{n^i_{k_1},n^i_{k_2},\ldots,n^i_{k_m},n^i_{k_{m+1}},i=1,2} 
respectively. Then for $1\leq j\leq m+1$ we have 
$n^1_{k_j}$ is an outward pointing normal of the simplex $\Gd^m H^1_{k_1}H^1_{k_2}\ldots H^1_{k_m}H^1_{k_{m+1}}$ if and only if 
$n^2_{k_j}$ is an outward pointing normal of the simplex $\Gd^m H^2_{k_1}H^2_{k_2}\ldots H^2_{k_m}H^2_{k_{m+1}}$. 
\end{claim}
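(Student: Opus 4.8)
The plan is to show that the orientations $n^i_j$ chosen in the two arrangements define \emph{the same} combinatorial orientation, so that the predicate ``$n^i_{k_j}$ is outward for the simplex $\Gd^m H^i_{k_1}\cdots H^i_{k_{m+1}}$'' becomes an invariant of the subscript-preserving isomorphism. First I would reduce outwardness to a single sidedness condition. For an $(m+1)$-subset $S=\{k_1<\cdots<k_{m+1}\}$ let $v^i_j=\bigcap_{q\neq j}H^i_{k_q}$ be the vertex of the simplex opposite the face lying on $H^i_{k_j}$. Every other vertex of the simplex lies on $H^i_{k_j}$, so $n^i_{k_j}$ is outward if and only if $v^i_j$ lies strictly on the side of $H^i_{k_j}$ opposite to the direction of $n^i_{k_j}$. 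Hence the claim is equivalent to the statement that the side of $H^i_{k_j}$ containing $v^i_j$, measured against the chosen normal $n^i_{k_j}$, is the same for $i=1$ and $i=2$.

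The second step transports this sidedness across the isomorphism. Choosing an auxiliary index $j_0\in S\setminus\{j\}$, form the line $L^i=\bigcap_{q\neq j,\,j_0}H^i_{k_q}$, which carries the crossing point $v^i_{j_0}=L^i\cap H^i_{k_j}$ and the opposite vertex $v^i_j=L^i\cap H^i_{k_{j_0}}$. The side of $H^i_{k_j}$ on which $v^i_j$ sits is determined by the order along $L^i$ of $v^i_j$ relative to the crossing point $v^i_{j_0}$, together with the direction in which $n^i_{k_j}$ pushes along $L^i$. The ordering of intersection points on $L^i$ is preserved by the isomorphism through the preservation of central vertices (Theorem~\ref{theorem:PCP}), and the projection onto the plane transverse to $L^i$ is available over $\mbb{F}$ by the construction of orthogonal projections in Section~\ref{sec:EOP}; the point at infinity $L^i\cap H^i_l$, with $n^i_l$ chosen on the side away from the bounded vertices, furnishes a consistent reference direction for the betweenness comparison. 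What remains is to see that the pushing direction of $n^1_{k_j}$ and of $n^2_{k_j}$ along their respective lines agree, and this is exactly the orientation consistency handled next.

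The third step establishes this consistency by propagation from the base simplex. Within the ambient induction already running in the $\La$ proof I would connect an arbitrary $S$ to the base simplex $\{1,\ldots,m+1\}$ by a chain of adjacent simplices, each obtained by swapping a single index. For a single swap one fixes the $m$ common hyperplanes $H^i_q$, $q\in R$, meeting at the point $p^i=\bigcap_{q\in R}H^i_q$, and compares the two cones at $p^i$ truncated by $H^i_a$ and by $H^i_b$. Whether the outward normals of the shared faces are preserved or reversed between the two simplices is governed by the relative position on each edge line of the faces cut by $H^i_a$ and $H^i_b$, which by the second step is an isomorphism invariant, while the reversal of the normal at the swapped face is dictated precisely by Observation~\ref{obs:ChangeSign}. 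Since the combinatorial pattern is identical for $i=1$ and $i=2$, the change of outwardness is the same, and starting from the base simplex (where all $n^i_j$, $j\le m+1$, are outward by construction) the predicate propagates consistently.

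The main obstacle I expect is in the last step: a single-swap chain from $S$ to the base simplex is not unique, so I must rule out a global sign ambiguity of the two a priori possible orientation patterns, exactly as the pair $(r_1,s_1)$ versus its complement had to be disambiguated in Theorem~\ref{theorem:ExtTheorem}. I would settle this by the same continuity-and-connectedness device: the assignment of the orientation pattern is locally constant along the skeleton of the arrangement, which is connected in the Zariski topology over the infinite ordered field $\mbb{F}$ (Theorem~\ref{theorem:ZCHA}), so only one global pattern can occur, and it must be the one agreeing with the base simplex. The reductions in the first two steps are essentially formal; it is this coherence of the single-swap bookkeeping together with the Zariski connectedness that carries the real weight of the argument.
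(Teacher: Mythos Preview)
Your first step matches the paper: outwardness of $n^i_{k_{j_0}}$ is the statement that the opposite vertex $T^i=\bigcap_{j\neq j_0}H^i_{k_j}$ lies on the $-n^i_{k_{j_0}}$ side of $H^i_{k_{j_0}}$. After that, however, you build far more than is needed. The paper's argument is a single comparison against a fixed reference vertex, with no propagation, no swap chain, and no connectedness device.

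Here is the idea you are missing. Assuming $\{k_1,\ldots,k_{m+1}\}\neq\{1,\ldots,m+1\}$, one can choose $1\le r\le m+1$ so that the base-simplex vertex $S^i_r=\bigcap_{p\le m+1,\,p\neq r}H^i_p$ does not lie on $H^i_{k_{j_0}}$ and is distinct from $T^i$. The point is that \emph{by the very construction of the normals}, $n^i_{k_{j_0}}$ points away from $S^i_r$ for both $i=1,2$: if $k_{j_0}\le m+1$ one is forced to take $r=k_{j_0}$, and $n^i_{k_{j_0}}$ is outward for the initial simplex, hence points away from its opposite vertex $S^i_r$; if $k_{j_0}>m+1$ then $n^i_{k_{j_0}}$ was chosen at infinity, pointing away from all earlier vertices and in particular from $S^i_r$. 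Now the isomorphism (identity on subscripts) guarantees that $S^1_r,T^1$ lie on the same side of $H^1_{k_{j_0}}$ if and only if $S^2_r,T^2$ lie on the same side of $H^2_{k_{j_0}}$. Combining, $n^1_{k_{j_0}}$ points away from $T^1$ if and only if $n^2_{k_{j_0}}$ points away from $T^2$, which is the claim.

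Two specific problems with your plan. First, the appeal to Observation~\ref{obs:ChangeSign} in step~3 is misplaced: that observation concerns how the normal of a line at infinity decomposes into positive combinations of pairs of normals of three other lines and singles out the central crossing; it is used in the $(\Rightarrow)$ half to read off central points from a \emph{given} convex positive bijection. It does not tell you how outwardness of the swapped face transforms under a single-index swap of simplices. What actually controls the swapped face in your chain is again the construction of $n^i_b$ as an infinity normal pointing away from earlier vertices, i.e., exactly the reference-vertex idea above. Second, the Zariski-connectedness resolution in step~4 is unnecessary here: once you anchor against $S^i_r$ there is no two-valued ambiguity to break, and even within your propagation scheme the ambiguity disappears if the swaps are performed in increasing order of the incoming index. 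Your outline could be salvaged, but it recreates the $(\Rightarrow)$ machinery where a one-line anchoring against $S^i_r$ suffices.
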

\begin{proof}[Proof of Claim]
The proof is trivial if $k_j=j,1\leq j\leq m+1$ because of the choice of the normals. 

Assume $\{k_1,k_2,\ldots,k_{m+1}\} \neq \{1,2,\ldots,m+1\}$. Fix $1\leq j_0 \leq m+1$. Consider the points 
\equ{T^i=H^i_{k_1}\cap \ldots H^i_{k_{j_0-1}}\cap H^i_{k_{j_0+1}}\cap \ldots \cap H^i_{k_{m+1}},i=1,2.}
If $n^i_{k_{j_0}}$ points towards the point $T^i$ then it is an inward pointing normal of the simplex $\Gd^m H^i_{k_1}H^i_{k_2}\ldots H^i_{k_m}H^i_{k_{m+1}}$. Otherwise it is an 
an outward pointing normal of the simplex $\Gd^m H^i_{k_1}H^i_{k_2}\ldots H^i_{k_m}H^i_{k_{m+1}}$ for $i=1,2$.

Let \equ{S^i_r=H^i_1\cap \ldots \cap H^i_{r-1}\cap H^i_{r+1}\cap \ldots \cap H^i_{m+1}, 1\leq r\leq m+1,i=1,2.}
$\{S^i_r\mid 1\leq r\leq m+1\}$ are the vertices of the initial simplex $\Gd^m H^i_1H^i_2\ldots H^i_mH^i_{m+1}$,
$i=1,2$.

Now $l\geq m+2$ and there are at least $m+2$ hyperplanes. 
Since $\{k_1,k_2,\ldots,k_{m+1}\}\linebreak \neq \{1,2,\ldots,m+1\}$ there exists $1\leq r\leq m+1$ such that the point $S^i_r$ is not on the plane $H^i_{k_{j_0}}$
and different from $T^i$ for each $i=1,2$. We note that for $i=1,2$, existence of two such different points $T^i\neq S^i_r$, not on the plane $H^i_{k_{j_0}}$, does not hold if $\{k_1,k_2,\ldots,k_{m+1}\} = \{1,2,\ldots,m+1\}$.

Now the normals $n^i_{k_{j_0}}$ of $H^i_{k_{j_0}}$ point to the other side of $S^i_r$ by choice for $i=1,2$. Since the arrangements 
are isomorphic, $S^1_r,T^1$ are on the same side of $H^1_{k_{j_0}}$ if and only if $S^2_r,T^2$ are on the same side of $H^2_{k_{j_0}}$. 
Hence we conclude that $n^1_{k_{j_0}}$ is an outward pointing normal of the simplex $\Gd^m H^1_{k_1}H^1_{k_2}\ldots H^1_{k_m}H^1_{k_{m+1}}$ if and only if 
$n^2_{k_{j_0}}$ is an outward pointing normal of the simplex $\Gd^m H^2_{k_1}H^2_{k_2}\ldots\linebreak H^2_{k_m}H^2_{k_{m+1}}$. 
This proves the claim.
\end{proof}

Now we use the following fact. We have that for an $m\operatorname{-}$dimensional simplex $\Gd^m$ if we choose all the normals of the planes
as outward pointing say 
\equ{u_1,u_2,\ldots,u_m,u_{m+1}} then we can express for every $1\leq i\leq m+1$ 
\equ{-u_i=\us{j=1,j\neq i}{\os{j=m+1}{\sum}} \ga_ju_j \text{ with }\ga_i>0.}

Here the convex positive bijection
\equa{&\gd: \{\pm n^1_1, \pm n^1_2,\ldots, \pm n^1_{m+1}\} \lra \{\pm n^2_1, \pm n^2_2,\ldots, \pm n^2_{m+1}\},\\ 
&\gd(n^1_j)=n^2_j,\gd(-n^1_j)=-n^2_j, 1\leq j \leq m+1}
is an isomorphism between the truncated normal systems
and inductively for $l>m+1$ extends to a convex positive bijection  
\equa{&\gd: \{\pm n^1_1, \pm n^1_2,\ldots, \pm n^1_{l-1}\} \lra \{\pm n^2_1, \pm n^2_2,\ldots, \pm n^2_{l-1}\},\\ 
&\gd(n^1_j)=n^2_j,\gd(-n^1_j)=-n^2_j, 1\leq j \leq l}
This proves that there exists a convex positive bijection between $\mcl{U}_1$ and $\mcl{U}_2$ given by
\equ{\gd(n^1_j)=n^1_j,\gd(-n^1_j)=-n^2_j,1\leq j\leq n.}
There is also another one given by $-\gd$.

This completes the proof of Theorem~\ref{theorem:SNRT} and hence the proof of main Theorem~\ref{theorem:NRTMT}
of the article.
\end{proof}

\section{\bf{Graphs of compatible pairs associated to normal systems in three dimensions}}
\label{sec:GCPANSTD}
In this section we associate an invariant namely the graph of compatible pairs for a normal system in three
dimensions. Then we observe that this invariant determines a normal system in three dimensions up to an isomorphism.
First we need a few definitions.
\begin{defn}[Graph of Compatible Pairs]
\label{defn:GCP}
~\\
Let $\mcl{N}=\{L_1,L_2,\ldots L_n\}$ be a normal system in $\mbb{R}^3$. Let 
$\mcl{U}=\{\pm u_1,\pm u_2,\ldots,\pm u_n\}$ be the corresponding set of a pair of antipodal vectors on these lines of $\mcl{N}$. 
We associate a graph $G=(V,E)$ as follows. The vertex set of the graph is given by 
\equ{V=\{\{x,y\}\mid x,y\in \mcl{U}, x\neq \pm y\}.}
We say a vertex $\{x_1,y_1\}$ is compatible with another vertex $\{x_2,y_2\} \neq \{x_1,y_1\}$ if there exist 
positive constants $a>0,b>0,c>0,d>0$ such that $ax_1+by_1=cx_2+dy_2$. This automatically means that the set
\equ{\{x_1,y_1,x_2,y_2\}\subs \mcl{U}}
is maximally linearly independent.
The edge set $E$ of the graph is defined as follows. There is an edge between two vertices $v_1,v_2\in V$
if they are compatible.
\end{defn}
The following theorem is an immediate consequence from the definitions whose proof is straight forward.
\begin{theorem}
Let $\mcl{N}_1,\mcl{N}_2$ be two normal systems in $\mbb{R}^3$. Let $\mcl{U}_1,\mcl{U}_2$ be the sets of 
antipodal pairs of vectors respectively and $\gd:\mcl{N}_1 \lra \mcl{N}_2$ be
a convex positive bijection. Then the graphs $G_1,G_2$ of compatible pairs of normal systems respectively
are isomorphic by an isomorphism induced by $\gd$. Conversely if $\gd$ is a bijection between $\mcl{U}_1,\mcl{U}_2$
which preserves antipodal pairs such that $\gd$ induces an isomorphism of the graphs $G_1,G_2$ of compatible pairs 
then $\gd$ is a convex positive bijection. 
\end{theorem}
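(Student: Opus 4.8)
The plan is to translate the purely combinatorial notion of \emph{compatibility} back into the analytic language of positive combinations relative to a base, which is exactly the language of Definition~\ref{defn:CPB}; once this dictionary is in place both implications become formal. First I would record two cheap structural facts. Because $\mcl{N}_1,\mcl{N}_2$ are three-dimensional normal systems, any three vectors lying on three distinct lines are automatically linearly independent (the maximal linear independence of Definition~\ref{defn:NS} with $m=3$), so a base is nothing more than a choice of one vector from each of three distinct lines. Consequently $\gd$, which preserves antipodal pairs and sends distinct lines to distinct lines, carries bases to bases and induces a well-defined bijection of vertex sets $V_1 \lra V_2$, $\{x,y\}\mapsto \{\gd(x),\gd(y)\}$ (one checks $\gd(x)\neq \pm\gd(y)$ from injectivity and $\gd(-y)=-\gd(y)$).

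The heart of the argument is the following reformulation, which I would prove once and then invoke in both directions: for $u,u_1,u_2,u_3$ on four distinct lines with $\{u_1,u_2,u_3\}$ a base,
\[
u = a_1u_1 + a_2u_2 + a_3u_3 \text{ with } a_1,a_2,a_3>0 \quad\Llra\quad \{u,-u_3\} \text{ is compatible with } \{u_1,u_2\}.
\]
The forward direction just rewrites $u = a_1u_1+a_2u_2+a_3u_3$ as $1\cdot u + a_3(-u_3) = a_1u_1 + a_2u_2$, a compatibility relation with all four coefficients positive; the backward direction takes a relation $pu + q(-u_3)=ru_1+su_2$ with $p,q,r,s>0$, solves for $u=(r/p)u_1+(s/p)u_2+(q/p)u_3$, and reads off the signs by uniqueness of coordinates in a base. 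The essential point, and the reason the $-u_3$ appears, is that switching to the antipodal representative is precisely what converts the mixed-sign dependency among four generic vectors into an all-positive combination.

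With the reformulation in hand, the forward implication of the theorem (that $\gd$ a convex positive bijection induces a graph isomorphism) follows by a short chase: if $\{x_1,y_1\}$ is compatible with $\{x_2,y_2\}$, the reformulation applied to $u=y_2$ and base $\{x_1,y_1,-x_2\}$ places $y_2$ in the open positive cone of that base, Definition~\ref{defn:CPB} sends this to $\gd(y_2)$ in the open positive cone of $\{\gd(x_1),\gd(y_1),-\gd(x_2)\}$, and the reformulation read backwards in $\mcl{N}_2$ returns compatibility of $\{\gd(x_1),\gd(y_1)\}$ with $\{\gd(x_2),\gd(y_2)\}$. The converse runs the same chase in the opposite order: starting from $u=\sum a_iu_i$ with all $a_i>0$ I convert to compatibility of $\{u,-u_3\}$ with $\{u_1,u_2\}$, transport this edge across the given graph isomorphism (using $\gd(-u_3)=-\gd(u_3)$), and convert back to $\gd(u)=\sum b_i\gd(u_i)$ with all $b_i>0$, which is exactly the condition of Definition~\ref{defn:CPB}. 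In both implications the remaining half of each ``if and only if'' is obtained by rerunning the argument with $\gd^{-1}$, which is again of the same type.

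\textbf{Main obstacle.} The genuine content sits entirely in the reformulation, and the care required there is bookkeeping rather than ideas: one must track which antipodal representative is used, and one must invoke the general-position property of normal systems twice — to know that every triple of distinct lines is a base, and that no coefficient $a_i$ can vanish (so that the pairs occurring are legitimate vertices and $u$ genuinely lies off the three lines). Everything else reduces to the symmetry of the hypotheses under $\gd\mapsto\gd^{-1}$.
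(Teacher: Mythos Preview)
Your proof is correct and is precisely the unpacking of definitions that the paper has in mind when it declares the result ``an immediate consequence from the definitions whose proof is straight forward.'' The paper gives no argument beyond that sentence; your reformulation lemma (relating $u=a_1u_1+a_2u_2+a_3u_3$ with all $a_i>0$ to compatibility of $\{u,-u_3\}$ with $\{u_1,u_2\}$) is exactly the dictionary one writes down when expanding those definitions, and the two implications then follow as you indicate.
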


Now using the graph invariant of the normal system we give examples of two non-isomorphic normal systems 
below.
\subsection{Examples of two non-isomorphic normal systems in three dimensions}
\label{sec:ENINSTD}
In this section we give two examples of normal systems consisting of six lines in three dimensions 
which are not isomorphic to each other by showing that their graphs
of compatible pairs are not isomorphic which is again proved by showing vertices of
degree $1$ and degree $5$ exist in one but not in the other.

We consider the following sets $\mcl{U}_i,i=1,2$ of antipodal unit vectors on the two normal systems
$\mcl{N}_i,i=1,2$ respectively over the field of 
rational numbers $\mbb{Q}$ which is contained in all ordered fields.
Let \equa{&u_1=(1,0,0)=v_1,u_2=(0,1,0)=v_2,u_3=(0,0,1)=v_3,\\
&u_4=\big(\frac 13,\frac 23,\frac 23\big)=v_4, u_5=\big(\frac 19,\frac 49,\frac 89\big)=v_5,
u_6=\big(\frac 6{11},\frac 6{11},\frac 7{11}\big),v_6=\big(\frac 2{11},\frac 6{11},\frac 9{11}\big).}
Let \equ{\mcl{U}_1=\{\pm u_i\mid 1\leq i \leq 6\},\mcl{U}_2=\{\pm v_i\mid 1\leq i \leq 6\}}
We have $\mcl{U}_1\cap \mcl{U}_2=\{\pm u_1,\pm u_2,\pm u_3,\pm u_4,\pm u_5\}=\{\pm v_1,\pm v_2,\pm v_3,\pm v_4,\pm 
v_5\}$. 

Now we obtain the following $\binom{6}{4}=15$ equations for $\mcl{U}_1$.
\begin{enumerate}
\item $3u_4= u_1+2u_2+2u_3=(1,2,2)$.
\item $9u_5=u_1+4u_2+8u_3=(1,4,8)$.
\item $11u_6=6u_1+6u_2+7u_3=(6,6,7)$.
\item $12u_4=3u_1+4u_2+9u_5=(4,8,8)$.
\item $5u_1+21u_4=2u_2+22u_6=(12,14,14)$.
\item $88u_6=41u_1+20u_2+63u_5=(48,48,56)$.
\item $u_1+9u_5=4u_3+6u_4=(2,4,8)$.
\item $11u_6=3u_1+u_3+9u_4=(6,6,7)$.
\item $9u_1+9u_5=10u_3+22u_6=(12,12,24)$.
\item $9u_5=2u_2+6u_3+3u_4=(1,4,8)$.
\item $18u_4=6u_2+5u_3+11u_6=(6,12,12)$.
\item $54u_5=18u_2+41u_3+11u_6=(6,24,48)$.
\item $44u_6=13u_1+30u_4+9u_5=(24,24,28)$
\item $123u_4=26u_2+45u_5+66u_6=(41,82,82)$.
\item $13u_3+27u_4=27u_5+11u_6=(9,18,31)$.
\end{enumerate}
Also we obtain the following $\binom{6}{4}=15$ equations for $\mcl{U}_2$.
\begin{enumerate}
\item $3v_4=v_1+2v_2+2v_3=(1,2,2)$.
\item $9v_5=v_1+4v_2+8v_3=(1,4,8)$.
\item $11v_6=2v_1+6v_2+9v_3=(2,6,9)$.
\item $12v_4=3v_1+4v_2+9v_5=(4,8,8)$.
\item $27v_4=5v_1+6v_2+22v_6=(9,18,18)$.
\item $88v_6=7v_1+12v_2+81v_5=(16,48,72)$.
\item $v_1+9v_5=4v_3+6v_4=(2,4,8)$.
\item $v_1+11v_6=3v_3+9v_4=(3,6,9)$.
\item $v_1+27v_5=6v_3+22v_6=(4,12,24)$.
\item $9v_5=2v_2+6v_3+3v_4=(1,4,8)$.
\item $11v_6=2v_2+5v_3+6v_4=(2,6,9)$.
\item $18v_5=2v_2+7v_3+11v_6=(2,8,16)$.
\item $v_1+44v_6=18v_4+27v_5=(9,24,36)$.
\item $66v_6=2v_2+21v_4+45v_5=(12,6,54)$.
\item $v_3+11v_6=3v_4+9v_5=(2,6,10)$.
\end{enumerate}

We observe that the graph of compatible pairs $G_1=(V_1,E_1)$ has edges of degree $1$ and $5$
associated to $\mcl{N}_1$. For example
the degree of the vertex $\{-u_1,u_2\}$ is one in $G_1$ and the only edge with this vertex is with vertex
$\{u_4,-u_6\}$ (equation $(5)$ in the first set) and there is no vertex of degree one in the graph $G_2=(V_2,E_2)$ of compatible pairs associated 
to $\mcl{N}_2$. 
Similarly the vertex $\{-u_1,-u_5\}$ has degree $5$ in the graph $G_1$ with edges to the vertices
\equ{\{u_2,-u_4\},\{u_2,-u_6\},\{-u_4,-u_3\},\{-u_6,-u_3\},\{u_4,-u_6\}}
given by equations $(4),(6),(7),(9),(13)$ respectively.
There are no vertices of degree $5$ in the graph $G_2$.
This shows that not all normal systems of the same cardinality are isomorphic in dimension three 
unlike dimension two.
\section{\bf{Topology appendix}}
\label{sec:TA}
In this section we prove zariski connectedness (in the sense of point set topology) 
of positive dimensional skeletons of a hyperplane arrangement over infinite fields. We start with a definition.
\begin{defn}[$k\operatorname{-}$Dimensional Skeleton]
\label{defn:SK}
~\\
Let $\mbb{F}$ be an infinite field. Let $(\mcl{H}_n^m)^{\mbb{F}}=\{H_1,H_2,\ldots,H_n\}$ be a hyperplane arrangement in $\mbb{F}^m$.
For $0\leq k \leq n$, the skeleton $S_{n-k}$ of $(n-k)\operatorname{-}$dimensional planes is defined to be 
\equ{S_{n-k}=\us{1\leq i_1 < i_2 < \ldots < i_k\leq n}{\bigcup} H_{i_1}\cap H_{i_2}\cap \ldots \cap H_{i_k}.}
\end{defn}
Now we state the theorem of this section.
\begin{theorem}[Zariski Connectedness of Positive Dimensional Skeletons of the Hyperplane Arrangement]
\label{theorem:ZCHA}
~\\
Let $\mbb{F}$ be an infinite field. Let $(\mcl{H}_n^m)^{\mbb{F}}=\{H_1,H_2,\ldots,H_n\}$ with $H_i\subs \mbb{A}^m_{\mbb{F}},1\leq i\leq n$ be a hyperplane arrangement.
For $m-1\geq k\geq 1$ the skeleton $S_k$ is connected in the point set topological sense in the zariski topology on the affine space $\mbb{A}^m_{\mbb{F}}$. 
\end{theorem}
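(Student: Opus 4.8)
The plan is to reduce the connectedness of $S_k$ to two independent facts: that each individual flat comprising $S_k$ is Zariski connected, and that the pattern of overlaps among these flats forms a connected graph. Write $S_k = \bigcup_{|I| = m-k} P_I$, where for a subscript set $I \subs \{1,2,\ldots,n\}$ of cardinality $m-k$ we put $P_I = \bigcap_{i\in I} H_i$. By Condition 1 of Definition~\ref{defn:HA} each $P_I$ is an affine subspace of $\mbb{A}^m_{\mbb{F}}$ of dimension $m-(m-k)=k$, hence nonempty since $1\leq k\leq m-1$. (If there is only one such flat the claim is immediate, so I assume $n>m-k$.)

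First I would establish that each $P_I$ is connected in the subspace Zariski topology. Choosing an affine-linear isomorphism $\mbb{F}^k \lra P_I$, which is a homeomorphism for the Zariski topologies since it is polynomial with polynomial inverse, it suffices to show $\mbb{F}^k$ is irreducible and therefore connected. This is precisely where the infiniteness of $\mbb{F}$ enters: if $\mbb{F}^k = Z_1 \cup Z_2$ with $Z_1,Z_2$ proper closed subsets, then $Z_j \subs V(f_j)$ for some nonzero polynomials $f_1,f_2$, so the nonzero polynomial $f_1 f_2$ would vanish at every point of $\mbb{F}^k$, which is impossible over an infinite field. Thus each $P_I$ is irreducible, hence connected.

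Next I would show that the intersection graph $G$, whose vertices are the $(m-k)$-subsets $I$ and with an edge between $I$ and $J$ whenever $P_I \cap P_J \neq \es$, is connected. The key observation is that $P_I \cap P_J = \bigcap_{i\in I\cup J} H_i$ is nonempty exactly when $|I\cup J|\leq m$: it has dimension $m-|I\cup J|\geq 0$ by Condition 1, and is empty by Condition 2 otherwise. When $I$ and $J$ differ by a single element, $|I\cup J| = (m-k)+1 \leq m$ precisely because $k\geq 1$; hence any two subsets related by a single swap are adjacent in $G$. Since one can pass between any two $(m-k)$-subsets of $\{1,\ldots,n\}$ by a sequence of single-element swaps (connectedness of the Johnson graph), $G$ is connected.

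Finally I would combine these facts via the standard lemma that a finite union of connected sets whose intersection graph is connected is itself connected: any separation $S_k = U \sqcup V$ into disjoint nonempty relatively open sets forces each connected $P_I$ wholly into $U$ or into $V$, while adjacency in $G$ forces adjacent flats onto the same side, so connectedness of $G$ leaves one side empty, a contradiction. The main conceptual point, and the reason the hypothesis $k\geq 1$ cannot be dropped, is the single-swap estimate $|I\cup J| = m-k+1 \leq m$: at $k=0$ the flats are the isolated vertices of $S_0$, no two of which meet, and the argument correctly breaks down. The infinite-field hypothesis is used only, but essentially, in the irreducibility step for the individual flats.
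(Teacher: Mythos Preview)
Your proof is correct and follows essentially the same approach as the paper: both arguments rest on the irreducibility of each flat over an infinite field together with the observation that one can pass between any two flats by changing one subscript at a time (your Johnson-graph step is exactly the paper's ``change the $m-1$ subscripts one by one''), combined with the clopen/separation argument. Your write-up is more explicit about where the hypotheses $k\geq 1$ and $|\mbb{F}|=\infty$ are used, but the underlying structure is the same.
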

\begin{note}
Theorem~\emph{\ref{theorem:ZCHA}} is used in proving extension Theorem~\emph{\ref{theorem:ExtTheorem}}. 
\end{note}
Before we prove the theorem we mention the following.
\begin{note}
\label{note:Zariski}
\begin{enumerate}
\item The affine space $\mbb{A}^m_{\mbb{F}},m\geq 1$ is irreducible if $\mbb{F}$ is infinite.
\item A union of two intersecting $k\operatorname{-}$dimensional planes is zariski connected but not zariski irreducible.
\item The union of two skew hyperplanes arrangements $(\mcl{H}^r_{n_1})^{\mbb{F}},(\mcl{H}^r_{n_2})^{\mbb{F}}$ 
embedded (skewly) in an $r\operatorname{-}$dimensional affine space with $r\leq m-1$ is neither zariski irreducible nor zariski 
connected.
\item \label{obs:ZariskiClopen}
Let $X$ be a topological space and let $U\subs X$ be a clopen (closed and open) set. If $Y\subs X$
is connected or irreducible then we have $U\cap Y\neq \es \Ra Y\subs U$.
\end{enumerate}
\end{note}
Now we prove the theorem.
\begin{proof}[Proof of Theorem~\emph{\ref{theorem:ZCHA}}]
Let $U\subs S_1$ be a non-empty clopen set. Since $U$ is non-empty $U$ contains a zariski line as it is irreducible. 
We can change the $m-1$ subscripts of a zariski line one by one to move from
one zariski line to another inside $U$ using Note~\ref{note:Zariski}(\ref{obs:ZariskiClopen}). Hence $U=S_1$. 
Thus the one-dimensional skeleton is zariski connected.
Similarly we have that for any $1\leq i\leq m-1$, if $U\subs S_i$ is a non-empty clopen set then $U=S_i$. Thus the $i\operatorname{-}$dimensional skeleton 
is zariski connected. We use Note~\ref{note:Zariski}(\ref{obs:ZariskiClopen}) in the proof. 
This proves Theorem~\ref{theorem:ZCHA}.
\end{proof}

\end{document}